\DeclarePairedDelimiterX\setc[2]{\{}{\}}{\,#1 \;\delimsize\vert\; #2\,}
\DeclarePairedDelimiter\abs{\lvert}{\rvert}%
\DeclarePairedDelimiter\norm{\lVert}{\rVert}%
\let\oldabs\abs
\def\abs{\@ifstar{\oldabs}{\oldabs*}}
\let\oldnorm\norm
\def\norm{\@ifstar{\oldnorm}{\oldnorm*}}
\newtheorem{theorem}{Theorem}[section]
\newtheorem{corollary}[theorem]{Corollary}
\newtheorem{lemma}[theorem]{Lemma}
\newtheorem{proposition}[theorem]{Proposition}
\theoremstyle{definition}
\newtheorem{definition}[theorem]{Definition}
\newtheorem{example}[theorem]{Example}
\newtheorem{remark}[theorem]{Remark}
\newtheoremstyle{algorithm} % name
    {\topsep}                    % Space above
    {\topsep}                    % Space below
    {\ttfamily}                   % Body font
    {}                           % Indent amount
    {\scshape}                   % Theorem head font
    {.}                          % Punctuation after theorem head
    {.5em}                       % Space after theorem head
    {}  % Theorem head spec (can be left empty, meaning ‘normal’)
\theoremstyle{algorithm}
\newtheoremstyle{theorem-w/o-number}
  {\topsep}   % ABOVESPACE
  {\topsep}   % BELOWSPACE
  {\itshape}  % BODYFONT
  {0pt}       % INDENT (empty value is the same as 0pt)
  {\bfseries} % HEADFONT
  {}         % HEADPUNCT
  {5pt plus 1pt minus 1pt} % HEADSPACE
  {}          % CUSTOM-HEAD-SPEC
\theoremstyle{theorem-w/o-number}
\newtheorem*{theorem*}{Theorem}
\newtheorem*{teorema*}{Teorema}
\newtheorem*{corollary*}{Corollary}
\newtheorem*{corolario*}{Corolario}
\newtheorem*{lemma*}{Lemma}
\newtheorem*{lema*}{Lema}
\newtheorem*{proposition*}{Proposition}
\newtheorem*{proposicion*}{Proposici\'on}
\newtheoremstyle{definition-w/o-number}
  {\topsep}   % ABOVESPACE
  {\topsep}   % BELOWSPACE
  {\normalfont}  % BODYFONT
  {0pt}       % INDENT (empty value is the same as 0pt)
  {\bfseries} % HEADFONT
  {}         % HEADPUNCT
  {5pt plus 1pt minus 1pt} % HEADSPACE
  {}          % CUSTOM-HEAD-SPEC
\theoremstyle{definition-w/o-number}
\newtheorem*{definition*}{Definition}
\newtheorem*{definicion*}{Definici\'on}
\newtheorem*{example*}{Example}
\newtheorem*{ejemplo*}{Ejemplo}\newtheorem*{remark*}{Remark}
\newtheorem*{observacion*}{Observaci\'on}
\numberwithin{equation}{section}
\let\tmp\oddsidemargin
\let\oddsidemargin\evensidemargin
\let\evensidemargin\tmp
\newcolumntype{L}[1]{>{\raggedright\let\newline\\\arraybackslash\hspace{0pt}}m{#1}}
\newcolumntype{C}[1]{>{\centering\let\newline\\\arraybackslash\hspace{0pt}}m{#1}}
\newcolumntype{R}[1]{>{\raggedleft\let\newline\\\arraybackslash\hspace{0pt}}m{#1}}
\newcommand{\N}{\mathbb{N}}
\newcommand{\bF}{\mathds F}
\newcommand{\bQ}{\mathbb Q}
\newcommand{\bR}{\mathbb R}
\newcommand{\bS}{\mathbb S}
\newcommand{\bV}{\mathbb V}
\newcommand{\bZ}{\mathbb Z}
\newcommand{\Image}{\text{\rm Im}\,}
\newcommand{\Ker}{\text{\rm Ker}\,}
\newcommand{\Coker}{\text{Coker}\,}
 \newcommand{\dlie}{\partial }
 \newcommand{\lib }{\mathbb{L}}
\newcommand{\catdga}{\operatorname{{\bf DGA}}}
\newcommand{\catdgl}{\operatorname{{\bf DGL}}}
\newcommand{\id}{\text{id}}
\newcommand{\De}{\Delta}
\newcommand{\al}{\alpha}
\newcommand{\be}{\beta}
\newcommand{\calH}{\mathcal{H}}
\newcommand{\calK}{\mathcal{K}}
\begin{document}

\begin{center}
{\LARGE Optimising the topological information of the {$A_\infty$-persistence} groups}

\vspace{3mm}

{\large Francisco Belch\'{\i}\let\thefootnote\relax} 
\footnote{
Francisco Belch\'i-Guillam\'on, Mathematical Sciences, University of Southampton,
Building 54,
Highfield, Southampton SO17 1BJ, UK.
\textbf{\tt{frbegu@gmail.com}} \hfill ORCID: 0000-0001-5863-3343

%\vskip
% 1pt
This work has been supported by the Spanish MINECO grants MTM2010-18089 and
MTM2013-41762-P, by the Junta de
Andaluc\'\i a grant FQM-213 and by the UK's EPSRC grant
\emph{Joining the dots: from data to insight,}
EP/N014189/1.
\vskip
 1pt
 \textbf{Key words:} persistent homology, zigzag persistence, $A_\infty$-persistence, topological data analysis; $A_\infty$-(co)algebras, Massey products, knot theory, rational homotopy theory, spectral sequences.
 
2010 Mathematics subject classification: 16E45, 18G55, 55S30, 57M25, 57Q45, 18G40, 55P62, 55U99.

All figures have been drawn by the author using mainly the software Inkscape.
}
\end{center}

\begin{abstract}
Persistent homology typically studies the evolution
of homology groups $H_p(X)$ (with coefficients in a field) along a filtration of topological spaces. 
$A_\infty$-persistence extends this theory by
analysing the evolution of subspaces such as 
$V \coloneqq \Ker {\Delta_n}_{| H_p(X)} \subseteq H_p(X)$,
where $\{\Delta_m\}_{m\geq1}$ denotes a structure of $A_\infty$-coalgebra on $H_*(X)$.
In this paper we illustrate how $A_\infty$-persistence can be useful beyond persistent homology by discussing the topological meaning of $V$, which is the most basic form of $A_\infty$-persistence group. In addition, we explore how to choose
$A_\infty$-coalgebras along a filtration to make the $A_\infty$-persistence groups carry more faithful information.
\end{abstract}

\tableofcontents

%%%%%%%%%%%%%%%%%%%%%%%%%%%%%%%%%%%%%
%%%%%%%%%%%%%%%%%%%%%%%%%%%%%%%%%%%%%
%%%%%%%%%%%%%%%%%%%%%%%%%%%%%%%%%%%%%
\section*{Introduction}

Persistent homology \cite{Carlsson-Zomorodian05,
Edelsbrunner-Letscher-Zomorodian02} computes the (persistent) Betti numbers of a sequence of topological spaces and continuous maps
$$ 
\xymatrix{
\calK\colon  & 
K_0
\ar[r] & K_1
\ar[r] & \ldots
\ar[r] & K_N
}$$
created by varying a parameter such as
time, thickness, intensity, height, \emph{etc.}
Depending on the context, this can allow us to discover highly non-linear structure in data or to compute novel geometric descriptors of shapes.
For instance,
G. Carlsson \emph{et al.} considered a point cloud dataset built from 3 by 3 high-contrast patches from grey-scale natural images and studied an unknown space $X$ on which the points accumulated with high density.
They used persistent homology to estimate some Betti numbers of $X$,
but went further to find that the 2-skeleton of $X$ formed a Klein bottle 
\cite{Carlsson-Ishkhanov-de_Silva-Zomorodian08}.
This extra knowledge was then used as starting point to develop a dictionary for texture representation
\cite{Perea-Carlsson14}. This is a great example of how important it can be to find
persistent topological information beyond the level of Betti numbers.
$A_\infty$-persistence \cite{Belchi-Murillo15} aims at doing so in a semi-automated way by studying
persistent topological information at the level of $A_\infty$-structures --
algebraic constructions that can encode attributes related to cup product and higher order Massey products (see Fig. \ref{fig:DiagramaResumen}).

\begin{figure}[h!]
       \centering
		\includegraphics[width=0.6\textwidth]
		{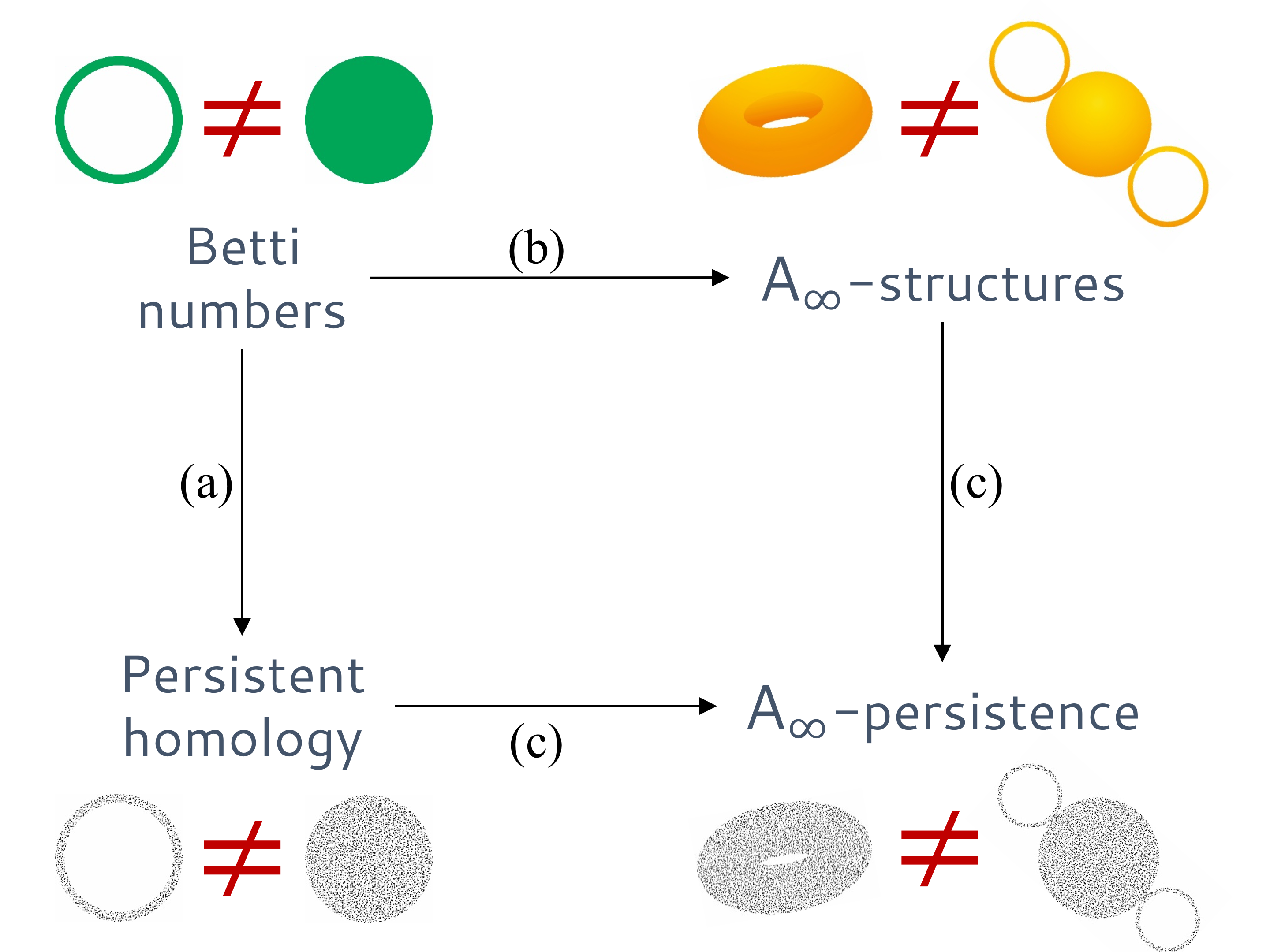}
	\caption{Persistence applied to point cloud datasets:
(a) Betti numbers 
contribute valuable topological information about solid objects,
able to distinguish \emph{e.g.,} a circumference from a disc.
Persistent homology adapts these invariants to the study of point cloud datasets,
so that a sample from a circumference can be told apart from that of a disc. 
(b) $A_\infty$-structures provide a much more detailed structural description 
than that of Betti numbers. They contain all the information of the cup product, and hence can tell apart \emph{e.g.,} a torus from a wedge of spheres 
$\bS^1 \vee \bS^2 \vee \bS^1$. On top of that, they contain information related to  (higher order) Massey products, allowing us to also distinguish \emph{e.g.,} 3 unlinked rings from the Borromean rings (see Fig. \ref{fig:UnlinkedVsBorromean}).
(c) In this context, the aim of $A_\infty$-persistence is to use the strengths
 of $A_\infty$-(co)algebras to sharpen the tool of persistent homology,
 \emph{e.g.,}
to allow the distinction of more involved point cloud datasets.}
	\label{fig:DiagramaResumen} 
\end{figure}

We plan to work with $A_\infty$-coalgebras on the homology $H_*(X)$ of a space $X$ and with $A_\infty$-algebras on its cohomology $H^*(X)$. These structures codify a good deal of topological information of $X$, as this paper will illustrate.
For all concepts below related to $A_\infty$-structures, see \S \ref{sec_A-inftyStructs}.
Given two homotopy equivalent spaces,
the transferred $A_\infty$-coalgebras they induce coincide up to isomorphism.
Hence, ideally, we would like to study the persistence of this whole
isomorphism class of $A_\infty$-structures. 
Unfortunately, this class is too large to compute in general,
so we end up having to sacrifice some of this structure in return for computability.
More concretely,
after choosing a field of coefficients,
we fix a transferred $A_\infty$-coalgebra
$(H_*(K_i), \{\Delta^i_n\}_n)$
for each space $K_i$ in $\calK$
and think of defining a vector subspace $V_i \subseteq H_p(K_i)$
for every $i=0, \ldots, N$, so that the following two conditions hold:
\begin{enumerate}
\item $V_i$ contains enough information from the
$A_\infty$-coalgebra $\{\Delta^i_n\}_n$ to be useful.
\item $V_i$ is simple enough to allow a feasible persistence computation.
\end{enumerate}
Persistent homology encodes in a barcode 
the evolution along $\calK$ of the vector space $H_p(K_i)$.
By (2) we mean that, similarly,
we should be able to encode in a barcode the evolution along $\calK$ of the subspace 
$V_i \subseteq H_p(K_i)$.
In \cite{Belchi-Murillo15}, we used zigzag persistence \cite{Carlsson-de_Silva10}
to prove that 
$V_i \coloneqq \Ker {\Delta_n}_{| H_p(K_i)}$
satisfies (2), where here and henceforth $f_{| W}$ denotes the restriction of the map $f$ to the subspace $W$.

In this article, we do the following.
On the one hand, we draw a more comprehensive
picture of $V_i$'s trade-off between simplicity and retained information
from the $A_\infty$-structure. To do so, we show both what we lose
by not using the whole isomorphism class of $A_\infty$-structures, and what
we still gain with respect to Betti numbers.
Specifically, we show that $\dim V_i$
is not a homotopy invariant in the most general setting
(Ex. \ref{prop_De_n-not-invariant}),
but this dimension can still recover in some cases information not readily available in
the Betti numbers or even the cup product
(Thm. \ref{THM:min-invariant}, 
Cor. \ref{COR:min-invariant} , Prop. \ref{Prop:Borromean-m3} and \ref{Prop:ExtendingBorromean}).
On the other hand, recall that the presence of a homological feature in the sequence $\calK$ may sometimes be represented in a zigzag barcode by two or more different, totally independent intervals. This can mislead us to believe that they may be detecting 
more than one feature, as exemplified in \cite[\S 5.6.6]{Tausz-Carlsson11}.
As $A_\infty$-persistence uses a tweaked version of a zigzag barcode
(see \cite[Def. 2.8]{Belchi-Murillo15}), in \S \ref{compatibleAinftyStructures}
we will show how this ambiguity affects 
$A_\infty$-persistence
and discuss a way to bypass this issue.

This work is organised as follows: in \S \ref{sec_A-inftyStructs} 
we provide some background on $A_\infty$-structures.
In \S \ref{sec_barcodes}
we recall, from an algebraic point of view, the ordinary and $A_\infty$ versions of \emph{barcodes}. These are tools used to encode persistence information. In addition, we give an alternative proof of Lemma \ref{lemma_persModsSplit}, 
which has as corollary the barcode decomposition theorem of persistent homology
(Thm. \ref{Thm:Fundamental_thm_PH}).
In \S \ref{sec_topologicalMeaning}
we illustrate when $A_\infty$-persistence 
can be useful beyond persistent homology by discussing the 
topological meaning of the measurements $\dim V_i$ and their duals in cohomology
(Ex. \ref{prop_De_n-not-invariant}, Prop. \ref{Prop:Borromean-m3} and \ref{Prop:ExtendingBorromean}, Thm. \ref{THM:min-invariant} and
Cor. \ref{COR:min-invariant} ).
The results in this section can be used to choose the right $n$ to study the persistence of $\Delta_n$ and they also suggest a new persistence approach to links.
In \S \ref{compatibleAinftyStructures}
we exhibit how the aforementioned zigzag ambiguity problem affects $A_\infty$-persistence and propose a way to deal with it. This involves showing that we can sometimes make coherent choices
of $A_\infty$-structures on the sequence $\calK$
so that what $A_\infty$-persistence
has to decompose is a persistence module 
(Thm. \ref{THM:Coherent-choices-Quillen}).
We then prove the barcode decomposition theorem of $A_\infty$-persistence
for the case of such coherent choices of $A_\infty$-structure
(Thm. \ref{Thm_Fund_Thm_A_infty-p_Functorial})
to stress how much things simplify by bypassing 
the need for zigzag machinery.
We leave some of the proofs for \S \ref{sec_appendix} and 
finish with some conclusions in \S \ref{sec_conclusions}.

The results in \S \ref{sec_topologicalMeaning} and \S \ref{compatibleAinftyStructures} should be of interest both to algebraic topologists and to more applied scientists involved in data analysis in some way.

Let us fix some notation and assumptions for the rest of the paper.
For the basic notions of algebraic topology used (such as cup product) we refer the reader to classics such as \cite{Hatcher02}.
% \cite{Hatcher02, Munkres_Elements_of_AT}.
Throughout this paper, we will
always work over a field of coefficients $\bF$ which we will usually omit from the notation.
For instance,
$H_*(X)$ will denote the homology of $X$ with coefficients in $\bF$.
Every topological space $X$
considered will be assumed to have
finite dimensional homology groups in all degrees,
$\dim H_p(X) < \infty$.
We will denote by 
$$ 
\xymatrix{
\calK\colon  & 
K_0
\ar[r] & K_1
\ar[r] & \ldots
\ar[r] & K_N
}$$
a finite sequence of topological spaces and
continuous maps
and, for any integers $p\geq 0$ and $0 \leq i \leq j \leq N$,
we will denote by 
$$
f^{i,j}_p\colon  H_p(K_i)
\longrightarrow 
H_p(K_j)
\hspace{10mm}
\text{and}
\hspace{10mm}
f^{i,j}\colon  H_*(K_i)
\longrightarrow 
H_*(K_j)
$$
\noindent the linear maps induced in homology
by the composition
$
\xymatrix{
K_i
\ar[r] & K_{i+1}
\ar[r] & \ldots
\ar[r] & K_j.
}$

%%%%%%%%%%%%%%%%%%%%%%%%%%%%%%%%%%%%%
%%%%%%%%%%%%%%%%%%%%%%%%%%%%%%%%%%%%%
%%%%%%%%%%%%%%%%%%%%%%%%%%%%%%%%%%%%%
\section{$A_\infty$-structures}
\label{sec_A-inftyStructs}

Looking for algebraic structures that include
all the information of the homology groups and even of
the standard cohomology algebra given by the cup product, 
and provide still more detailed homological information, 
we naturally bump into $A_\infty$-structures.

\begin{definition}\label{DEF:A-infty-coalgebra}
An \textbf{${\bm A_\infty}$-coalgebra structure} 
$\{\De_n\}_{n \geq 1}$ on a graded vector space $C$
is a family of maps
$$\Delta_n\colon  C\longrightarrow C^{\otimes n}$$
of degree $n-2$
such that, for all $n \geq 1$,
the following  \emph{Stasheff identity} holds:
$$
\text{SI}(n)\colon 
 \sum_{i=1}^n \sum_{j=0}^{n-i} (-1)^{i+j+ij}
		\left( 1^{\otimes n-i-j} \otimes \De_i \otimes 1^j \right)
		\,\De_{n-i+1} = 0. $$	
\end{definition}

The identities $\text{SI}(n)$ for $n=1, 2, 3$ state that if
$(C, \{\De_n\}_{n \geq 1})$
is an $A_\infty$-coalgebra,
then $\De_1$ is a differential on $C$ and the comultiplication
$\De_2$ is coassociative up to the chain homotopy $\De_3$.
Moreover, any differential graded coalgebra (DGC henceforth) $(C,\dlie,\De)$
can be viewed as an $A_\infty$-coalgebra 
$(C, \{\De_n\}_{n \geq 1})$ by setting
$ \Delta_1 = \dlie, \Delta_2 = \De,$ and $\Delta_n =0$ for all $n > 2.$
%
%\begin{definition}\label{DEF:minimal-A-coalgebra}
An $A_\infty$-coalgebra
$(C, \{\De_n\}_{n \geq 1})$
is called \textbf{minimal}
if 
$\De_1=0.$
%\end{definition}

\begin{definition}\label{DEF:Morph-A-coalg}
A \textbf{morphism of $\bm{A_\infty}$-coalgebras} 
$$ f\colon (C,\{\De_n\}_{n\ge 1})\to (C',\{\De'_n\}_{n\ge 1})$$
is a family of maps
 $$
 f_{(k)}\colon C\longrightarrow {C'}^{\otimes k},\qquad k\ge 1,
 $$
of degree $k-1$, such that
for each $i\ge 1$,
the following identity holds:
$$
\text{MI}(i)\colon \sum_{\substack{
p+q+k=i\\ q \geq 1, p,k \geq 0
}}
(1^{\otimes p}\otimes\Delta'_q\otimes 1^{\otimes k})f_{(p+k+1)}=
\sum_{\substack{
k_1+\dots+k_\ell=i\\ l, k_j \geq 1
}}
(f_{(k_1)}\otimes\dots\otimes f_{(k_\ell)})\Delta_\ell.
$$

We say that the
morphism of $A_\infty$-coalgebras $f$
is:
\begin{itemize}
\item
an \textbf{isomorphism}
if $f_{(1)}$ is an isomorphism,
\item
a \textbf{quasi-isomorphism}
if $f_{(1)}$ induces an isomorphism
in homology.
\end{itemize}
\end{definition}

There are some trivial $A_\infty$-coalgebra structures one can always endow a graded vector space $C$ with, such as the one given by $\Delta_n = 0$ for all $n$.
This structure does not give in general any new information, so instead,
we will focus our attention in a special kind of
$A_\infty$-coalgebras we will refer to as the \emph{transferred} ones.
As we will see between this section and \S \ref{sec_topologicalMeaning},
out of all the $A_\infty$-coalgebra
structures one could endow $H_*(X)$ with,
the transferred ones
can give particularly meaningful information
about the homotopy type of $X$.
For instance, 
we mentioned that
$\De_3$ measures the degree to which
the coassociativity of $\De_2$
can be relaxed in an 
$A_\infty$-coalgebra
$(C, \{\De_n\}_{n \geq 1})$.
However, 
even if
$\De_2$ is strictly coassociative,
$\De_3$ may be non-trivial,
and in a transferred $A_\infty$-coalgebra, 
this non-triviality can give
crucial information
(see \S \ref{sec_topologicalMeaning}).

\begin{definition}\label{DEF:good-A-coalg}
We will say that an $A_\infty$-coalgebra
$\left( H_*(X), \{ \De_n \}_n \right)$ 
on the homology of a space $X$
is a \textbf{transferred
$\bm{A_\infty}$-coalgebra} (induced by $X$) if it is minimal and
quasi-isomorphic
to the 
$A_\infty$-coalgebra
$$\left( C_*(X), \{\dlie, \De, 0, 0, \ldots \} \right),$$
where $\left( C_*(X), \dlie \right)$
denotes the singular chain complex of $X$
and $\De$ denotes an approximation to the diagonal.
We will drop the \emph{'induced by $X$'} from the notation
when no confusion is possible.
\end{definition}

Analogously, in a reduced homology setting, we will refer to an $A_\infty$-coalgebra
$\left( \widetilde H_*(X), \{ \De_n \}_n \right)$ 
as a transferred one if it is minimal and quasi-isomorphic 
to the induced $A_\infty$-coalgebra at the reduced chains level
$$\left( \widetilde C_*(X), \{\widetilde\dlie, \widetilde\De, 0, 0, \ldots \} \right).$$

An immediate consequence of Definition \ref{DEF:good-A-coalg} is that
all transferred
$A_\infty$-coalgebras on $H_*(X)$
induced by $X$
are isomorphic.

In the proof of Prop. \ref{Prop:Borromean-m3}
we will use the following remark.

\begin{remark}
\label{Rmk_HTT}
For every space $X$,
there always exist
transferred
$A_\infty$-coalgebras on $H_*(X)$
induced by $X$, which can be computed in several ways,
most of them amounting to an application of the
Homotopy Transfer Theorem \cite{Kontsevich-Soibelman00,
Loday-Vallette12} (hence the name \emph{transferred}
$A_\infty$-coalgebras). This can be used as an algorithm that takes as input a diagram of the form
\begin{equation}\label{ecuacion1}
\xymatrix{ \ar@(ul,dl)@<-5.5ex>[]_\phi  & (M,d) \ar@<0.75ex>[r]^-\pi  & (N,d) \ar@<0.75ex>[l]^-\iota }
\end{equation}
in which $(N, d)$ is a chain complex, $(M,d)$ is a DGC with comultiplication $\Delta$, and the degree 0 chain maps $\pi$ and $\iota$ and the degree 1 chain homotopy $\phi$ make the following hold:
$\pi \iota={\rm id}_N$, $\pi  \phi=\phi\iota=\phi^2=0$ and $\phi$ is a chain homotopy between ${\rm id}_M$ and $\iota \pi $, \emph{i.e.,} $\phi d+d\phi=\iota \pi -{\rm id}_M$. 
The output of the algorithm includes an explicit minimal $A_\infty$-coalgebra structure $\{\De_n\}_n$ on $N$ with $\Delta_2=\pi^{\otimes 2} \Delta \iota$ and such that,
if $\phi = 0$, then $\De_n = 0$ for all $n>2$.
\end{remark}

The dual notion of an $A_\infty$-coalgebra
is that of an $A_\infty$-algebra.
Depending on the tools more suitable 
to making the computations,
it can be more convenient to work with one notion or the other.

\begin{definition}\label{DEF:A-algebra}
An \textbf{$\bm{A_\infty}$-algebra structure} $\{\mu_n\}_{n \geq 1}$
on a graded vector space $A$ is a family of maps
$$\mu_n\colon A^{\otimes n} \longrightarrow A$$
of degree $2-n$
such that, for all $n \geq 1$,
the following \emph{Stasheff identity} holds:
$$
\text{SI}(n)\colon 
\sum_{\substack{
n=r+s+t\\ s \geq 1\\ r,t \geq 0
}} (-1)^{r+st} \mu_{r+1+t}
\left( 1^{\otimes r} \otimes \mu_s \otimes 
1^{\otimes t} \right) = 0.
$$
\end{definition}

Everything said for $A_\infty$-coalgebras dualizes to
$A_\infty$-algebras.
In an $A_\infty$-algebra
$(A, \{\mu_n\}_{n \geq 1})$,
$\mu_3$ measures
the associativity of $\mu_2$
but even if $\mu_2$ is strictly associative,
$\mu_3$ may be non-trivial
and give useful information
(see \S \ref{sec_topologicalMeaning}).
The Homotopy Transfer Theorem also works
for differential graded algebras (DGA henceforth) and $A_\infty$-algebras,
and we can define transferred 
$A_\infty$-algebras
on the cohomology of a space, $H^*(X)$,
just as in Definition
\ref{DEF:good-A-coalg},
using the cup product instead of an approximation
to the diagonal.

In particular, for any transferred 
$A_\infty$-algebra $\{\mu_n\}_{n}$ on
$H^*(X)$, $\mu_2$ coincides with the cup product.
Hence, transferred $A_\infty$-structures encode all the information in the homology
groups of $X$ and in its cohomology algebra as well, but there is more.
For instance,
in \cite[Thm. 1.3]{Belchi-Murillo15}
we exhibited a way to build pairs of spaces with isomorphic homology groups and isomorphic cohomology algebras 
but non-isomorphic transferred $A_\infty$-coalgebras (on their homology),
and T. Kadeishvili proved in
\cite[Prop. 2]{Kadeishvili80}
that under mild conditions on a topological space $X$,
any transferred $A_\infty$-algebra on its cohomology
determines the cohomology of its loop space, $H^*(\Omega X)$,
whereas the cohomology ring of $X$ alone
does not.

%%%%%%%%%%%%%%%%%%%%%%%%%%%%%%%%%%%%%
%%%%%%%%%%%%%%%%%%%%%%%%%%%%%%%%%%%%%
%%%%%%%%%%%%%%%%%%%%%%%%%%%%%%%%%%%%%
\section{Barcodes}
\label{sec_barcodes}

In persistence, we are interested in finding out how 
the topology evolves along the sequence of topological spaces and 
continuous maps $\calK$
created by varying a parameter such as
time, thickness, intensity, height, \emph{etc.}
This is done by applying algebraic-topology constructions to $\calK$
and decomposing the result into the smallest pieces possible in a certain sense,
obtaining a graphical representation called a \emph{barcode}.
In this section we recall, from an algebraic point of view, the notions
of barcode in persistent homology \cite{Carlsson-Zomorodian05,
Edelsbrunner-Letscher-Zomorodian02} and $A_\infty$-persistence \cite{Belchi-Murillo15}.
In addition, we give an alternative proof of the decomposition theorem of persistent homology.

\begin{definition}
\label{Def:P.H.groups}
For every $0\leq i \leq j \leq N$
and $p\geq 0$, the 
\textbf{$\bm{p^{\text{\rm \textbf{th}}}}$ persistent
homology group}
of the sequence $\calK$
between $K_i$ and $K_j$
is defined as the vector space
$$ H^{i,j}_p ( \calK ) \coloneqq
\Image f^{i,j}_p $$
and its corresponding
\textbf{persistent Betti number} is defined as
$$ \be^{i,j}_p ( \calK ) \coloneqq
\dim H^{i,j}_p ( \calK ). $$
\end{definition}

\begin{definition}\label{Def:pers.mod}
A \textbf{persistence module} $\bV$
is a finite sequence of 
	finite dimensional vector spaces and
	linear maps between them 
	of the form
	$$\xymatrix{
	V_0 \ar[r] & V_1 \ar[r] &
	\,\ldots\, \ar[r] & V_N.
	}$$
\end{definition}

The Fundamental
Theorem of Persistent Homology
can be stated as follows:

\begin{theorem}
\label{Thm:Fundamental_thm_PH}
\emph{\cite[\S 3]{Carlsson-Zomorodian05}
\textbf{(Fundamental
Theorem of Persistent Homology)}}
For any integer $p \geq 0$, 
there exists a unique
multiset $M_p$ of
intervals of the form $[i,j)$ for $0 \leq i < j \leq N$,
and intervals of the form $[i,\infty)$ for $0 \leq i \leq N$,
such that	for all \,$0 \leq i \leq j \leq N$,
the dimension
$\dim H^{i,j}_p ( \calK )$
equals the number of intervals
in $M_p$
(counted with multiplicities)
which contain the interval $[i,j]$.

In particular, 
$\dim H_p ( K_i )$
equals the number of intervals
in $M_p$
(counted with multiplicities)
which contain the integer $i$.
\end{theorem}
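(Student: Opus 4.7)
The plan is to deduce Theorem \ref{Thm:Fundamental_thm_PH} as an essentially formal consequence of the forthcoming splitting result Lemma \ref{lemma_persModsSplit}. First I would apply the functor $H_p(-;\bF)$ to $\calK$ to produce the persistence module
$$\bV\colon \xymatrix{H_p(K_0)\ar[r]^{f^{0,1}_p} & H_p(K_1)\ar[r] & \cdots \ar[r] & H_p(K_N)},$$
which qualifies as a persistence module in the sense of Definition \ref{Def:pers.mod} because each $H_p(K_i)$ is finite dimensional by the standing assumption. Lemma \ref{lemma_persModsSplit} will then provide a decomposition, unique up to isomorphism and reordering of summands, of $\bV$ into a finite direct sum of \emph{interval modules} $\bI[a,b]$, namely modules that place a copy of $\bF$ at every position $a \leq k \leq b$ (linked by identity maps) and zero elsewhere.

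Next I would define the multiset $M_p$ by translating each summand $\bI[a,b]$ into the interval $[a, b+1)$ when $b < N$, and into $[a, \infty)$ when $b = N$. The counting statement then becomes purely algebraic. Because the image of a direct sum of maps is the direct sum of the images, and because the map from position $i$ to position $j \geq i$ in $\bI[a,b]$ is the identity exactly when $a \leq i$ and $j \leq b$ and zero otherwise, one obtains
$$\dim H^{i,j}_p(\calK) = \dim \Image f^{i,j}_p = \#\bigl\{\text{summands } \bI[a,b] : a \leq i,\; j \leq b \bigr\},$$
which coincides exactly with the number of intervals in $M_p$ containing $[i,j]$ under the encoding above. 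Specialising to $i = j$ recovers the final claim about $\dim H_p(K_i)$.

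Uniqueness of $M_p$ can be handled in two compatible ways: either by invoking the uniqueness clause built into Lemma \ref{lemma_persModsSplit}, or by observing that the counting formula itself pins down the multiset via an inclusion--exclusion recursion on the pairs $(i,j)$ starting from the longest containments. All of the substance of the theorem is therefore concentrated in Lemma \ref{lemma_persModsSplit}; Theorem \ref{Thm:Fundamental_thm_PH} itself is a matter of translation, and the only subtlety is the bookkeeping distinction between half-open intervals $[a, b+1)$ and infinite intervals $[a, \infty)$, which simply records whether an interval summand survives all the way to the final index $N$.
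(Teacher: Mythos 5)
Your overall route is the same as the paper's: apply Lemma \ref{lemma_persModsSplit} to the persistence module $H_p(K_0)\to\cdots\to H_p(K_N)$ and read off $\dim H^{i,j}_p(\calK)=\dim\Image f^{i,j}_p$, with the $i=j$ case giving $\dim H_p(K_i)$. However, you attribute to Lemma \ref{lemma_persModsSplit} something it does not assert: a direct-sum decomposition of the persistence module into interval modules $\bI[a,b]$, unique up to reordering. As stated (and as proved in the paper, via the Frobenius rank inequality and an inclusion--exclusion count), the Lemma is a purely numerical statement: it produces a unique multiset $M$ of intervals such that $\dim\Image f^{i,j}$ equals the number of intervals of $M$ containing $[i,j]$. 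No structure theorem for the module itself is claimed or needed. Your detour through interval summands is therefore unsupported by the cited Lemma --- it would require the classical decomposition theorem of Carlsson--Zomorodian/Gabriel, which is precisely the machinery this paper's alternative proof is designed to avoid --- and it is also unnecessary: with the Lemma as actually stated, the multiset $M$ it produces \emph{is} $M_p$, and the theorem is an immediate instance with no translation between summands and intervals required. The counting formula you end up with is correct, so the proof goes through once the appeal to a decomposition is replaced by a direct appeal to the Lemma's conclusion.
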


%\begin{definition}
%\label{Def:classicalBarcode}
The multiset $M_p$ in Thm.
\ref{Thm:Fundamental_thm_PH}
is known as the 
$\bm{p^{\text{\textbf{th}}}}$ \textbf{barcode}
of $\calK$.
%\end{definition}
Thm. \ref{Thm:Fundamental_thm_PH} is a consequence of the following
result:

\begin{lemma}
\label{lemma_persModsSplit}
\emph{\cite[\S 3]{Carlsson-Zomorodian05}}
	Let
	$\xymatrix{
	V_0 \ar[r]^-{f^{0,1}} & V_1 \ar[r]^-{f^{1,2}} &
	\,\ldots\, \ar[r]^-{f^{N-1,N}} & V_N
	}$
	be a persistence module
	and set
	$$
	f^{i,j}\coloneqq
	\begin{cases}
		f^{j-1,j}\circ\ldots\circ f^{i,i+1},
		\quad & i+1 < j, \\
		{\id}_{V_i},
		\quad & i=j.
	\end{cases}
	$$
	Then 
	there exists a unique multiset $M$ of
	intervals of the form $[i,j)$ for $0 \leq i < j \leq N$,
	and intervals of the form $[i,\infty)$ for $0 \leq i \leq N$,
	such that
	for all \,$0 \leq i \leq j \leq N$,
	the dimension
		$\dim \Image f^{i,j}$
	can be computed as the sum of
	the multiplicity of each interval in $M$
	that contains $[i,j]$.	
\end{lemma}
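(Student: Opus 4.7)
We prove both uniqueness and existence of the multiset $M$ by exhibiting a direct sum decomposition of the persistence module $\bV$ into \emph{interval modules}: those having $\bF$ in each position of a single interval $[b,d)$ (with identity maps between consecutive copies) and zero elsewhere, where $d=\infty$ is allowed to denote survival all the way to $V_N$. The dimension formula in the lemma is then an immediate bookkeeping computation, since on an interval summand supported on $[b,d)$ the map $f^{i,j}$ has rank $1$ precisely when $[i,j]\subseteq[b,d)$, and rank $0$ otherwise.

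\emph{Uniqueness.} Assuming existence, setting $r(i,j):=\dim\Image f^{i,j}$ and using the observation above, one recovers the multiplicity of each interval by inclusion--exclusion:
$$
\mu([b,d))=\bigl(r(b,d{-}1)-r(b{-}1,d{-}1)\bigr)-\bigl(r(b,d)-r(b{-}1,d)\bigr),
$$
with the conventions $r(-1,j):=0$ and $r(i,N{+}1):=0$, and with $d{-}1$ and $d$ read as $N$ and $N{+}1$ respectively in the case $d=\infty$. Hence $M$ is determined by the ranks and is unique.

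\emph{Existence.} We induct on $N$. The base case $N=0$ is immediate, giving $V_0$ as $\dim V_0$ copies of the interval module on $[0,\infty)$. For the inductive step, apply the hypothesis to the truncation $V_0\to\ldots\to V_{N-1}$ to obtain a decomposition and, in particular, compatible bases of $V_0,\ldots,V_{N-1}$. Extending to $V_N$ reduces to analysing how $f^{N-1,N}$ acts on those basis vectors of $V_{N-1}$ lying in summands still alive at position $N-1$.

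\emph{Main obstacle.} The key technical step is to perform a change of basis on $V_{N-1}$ that simultaneously (i) preserves the interval decomposition of the truncation and (ii) puts $f^{N-1,N}$ in the normal form needed to either extend or close each surviving interval. Condition (i) forces the change of basis to respect the grading by birth time $b_k$ of the surviving summands, since summands with distinct birth times cannot be mixed without breaking the earlier decomposition. This constrained diagonalisation can be carried out by Gaussian elimination performed successively on each birth-time block of the restriction of $f^{N-1,N}$ to the surviving subspace, with each invertible block-transformation propagated back through the identity maps of the relevant interval modules to induce consistent basis changes in $V_0,\ldots,V_{N-2}$. Once this is achieved, each surviving basis vector of $V_{N-1}$ is either sent to $0$ (closing its interval at $N$) or to a new basis vector of $V_N$ (extending its interval by one step); completing to a basis of $V_N$ by a complement of $\Image f^{N-1,N}$ yields new interval summands born at $N$, producing the desired decomposition of $\bV$.
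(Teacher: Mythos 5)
Your proof is correct in outline, but it takes a genuinely different route from the one in the paper. You reprove the lemma the ``structural'' way: by inducting on $N$ and building an explicit direct-sum decomposition of the persistence module into interval modules, which is essentially the Carlsson--Zomorodian approach that this paper deliberately sidesteps. The paper's proof is purely numerical: uniqueness comes from the same inclusion--exclusion formula you write down (your $\mu([b,d))$ is the paper's $N^{i,j}=d^{i,j}-d^{i-1,j}-d^{i,j+1}+d^{i-1,j+1}$), and existence is then reduced to checking that these forced multiplicities are nonnegative, which is exactly the Frobenius rank inequality $\operatorname{rank}(B)-\operatorname{rank}(AB)-\operatorname{rank}(BC)+\operatorname{rank}(ABC)\geq 0$ applied to the composable maps of the module. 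No bases, no induction, no normal forms. The trade-off is real: your argument proves the stronger statement that the module actually splits as a direct sum of interval modules, whereas the paper only certifies the rank bookkeeping asserted in the lemma; in exchange, the paper's proof is a few lines and avoids the delicate step your ``main obstacle'' paragraph gestures at. On that step, be aware that invertible transformations \emph{within} each birth-time block do not suffice (e.g.\ $\bF\to\bF^2\to\bF$ with $1\mapsto e_1$ and $e_1,e_2\mapsto 1$ requires replacing $e_2$ by $e_2-e_1$, mixing blocks); you must allow adding earlier-born vectors to later-born ones --- a block-triangular change of basis --- and your phrase about propagating the transformation back through the identity maps indicates you intend exactly this, so the sketch is sound, but it is the one point where the write-up would need to be made precise.
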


G. Carlsson and A. Zomorodian first proved 
Lemma \ref{lemma_persModsSplit} and therefore
Thm. \ref{Thm:Fundamental_thm_PH} in its full generality in \cite[\S 3]{Carlsson-Zomorodian05}
with a slightly different notation and vocabulary. They made use of the structure
theorem of finitely generated
modules over Principal Ideal Domains.
Next we give an alternative proof of 
Lemma \ref{lemma_persModsSplit}
(and therefore of Thm. \ref{Thm:Fundamental_thm_PH})
making use, instead, of the following well-known fact in linear algebra:

\begin{lemma}
\label{Lemma_FrobeniusRanks}
\textbf{(Frobenius inequality on matrix ranks)}
Let $A, B$ and $C$ be matrices such that the products
$AB, ABC$ and $BC$ are defined. Then
$$  \operatorname{rank}(B) - \operatorname{rank}(AB) 
- \operatorname{rank}(BC) + \operatorname{rank}(ABC) \geq 0.$$
\end{lemma}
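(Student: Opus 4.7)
The plan is to reduce both differences $\operatorname{rank}(B)-\operatorname{rank}(AB)$ and $\operatorname{rank}(BC)-\operatorname{rank}(ABC)$ to dimensions of certain intersections, and then to exploit an obvious containment between those intersections. First I would view $A$, $B$, $C$ as linear maps $C\colon Z \to Y$, $B\colon Y \to X$, $A\colon X \to W$ between finite-dimensional vector spaces, so that speaking of images and kernels is unambiguous and the compositions $AB$, $BC$, $ABC$ make sense.

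The key observation is the rank-nullity theorem applied to the restriction of $A$ to a subspace $U \subseteq X$ of its domain: it gives
$$\dim A(U) \;=\; \dim U - \dim (U \cap \Ker A).$$
I would apply this twice, first with $U = \Image B$ to get
$$\operatorname{rank}(AB) \;=\; \operatorname{rank}(B) - \dim(\Image B \cap \Ker A),$$
and then with $U = \Image (BC)$ to get
$$\operatorname{rank}(ABC) \;=\; \operatorname{rank}(BC) - \dim(\Image (BC) \cap \Ker A).$$
Substituting both into the left-hand side of the inequality turns the claim into the equivalent statement
$$\dim(\Image B \cap \Ker A) \;\geq\; \dim(\Image (BC) \cap \Ker A).$$

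The last step is immediate: one always has $\Image(BC) \subseteq \Image B$, and intersecting each side with $\Ker A$ preserves the containment, so the dimension inequality above is automatic. There is no serious obstacle to this proof; the only minor point to be careful with is invoking rank-nullity on the restrictions $A|_{\Image B}$ and $A|_{\Image(BC)}$ rather than on $A$ itself, but this is completely routine. The whole argument is essentially a two-line diagram chase, which is consistent with Frobenius's inequality being a purely formal linear-algebra statement.
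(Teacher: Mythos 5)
Your proof is correct. Note that the paper does not actually prove Lemma \ref{Lemma_FrobeniusRanks} at all --- it invokes it as a ``well-known fact in linear algebra'' and uses it only as an ingredient in the alternative proof of Lemma \ref{lemma_persModsSplit} --- so there is no proof in the paper to compare against. Your argument is the standard one: the identity $\dim A(U) = \dim U - \dim(U \cap \Ker A)$ applied to $U = \Image B$ and $U = \Image(BC)$ reduces the inequality to $\dim(\Image B \cap \Ker A) \geq \dim(\Image(BC) \cap \Ker A)$, which follows from $\Image(BC) \subseteq \Image B$. All steps check out, and this is exactly the kind of elementary justification the paper implicitly relies on.
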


Here is an alternative proof of Lemma \ref{lemma_persModsSplit}:

\begin{proof}
Let us set
$$ d^{i,j}\coloneqq
\begin{cases}
\dim \Image f^{i,j}, \quad &
0 \leq i \leq j \leq N,\\
0, \quad & \text{\rm otherwise}.
\end{cases}
$$
Let us assume that there existed a multiset $M$
consisting of:
\begin{itemize}
\item a number $N^{i,j-1}$	of intervals of the form $[i,j)$
for all $0 \leq i < j \leq N$, and
\item a number $N^{i,N}$ of intervals of the form $[i,\infty)$ 
for all $0 \leq i \leq N$,
\end{itemize} 
such that, for all \,$0 \leq i \leq j \leq N$,
$$ d^{i,j} = \#\{	\text{\rm intervals in M that contain $[i,j]$} \}.	$$
Then, an inclusion-exclusion-type argument shows that, for all $0 \leq i \leq j \leq N$,
$N^{i,j}$ is forced to be
\begin{equation}
N^{i,j} 
=
d^{i,j}-d^{i-1,j}-d^{i,j+1}+d^{i-1,j+1}.
\end{equation}
Hence, if $M$ exists, it must be unique.
On the other hand, 
the existence of $M$
amounts to
$N^{i,j} \geq 0$ holding for every $0\leq i
\leq j \leq N$. Finally, notice that Lemma \ref{Lemma_FrobeniusRanks} implies that these inequalities $N^{i,j} \geq 0$ hold, proving the existence of $M$.
\end{proof}

Thm. \ref{Thm:Fundamental_thm_PH} then
follows from applying Lemma \ref{lemma_persModsSplit}
to the persistence module
$$ 
\xymatrix{
H_p(K_0)
\ar[r]^-{f^{0,1}_p} & H_p(K_1)
\ar[r]^-{f^{1,2}_p} & \ldots
\ar[r]^-{f^{N-1,N}_p} & H_p(K_N).
}$$

For the rest of this section, 
choose a transferred $A_\infty$-coalgebra
structure $\{\De_n^i\}_n$
on the homology of each space $K_i$  in 
$\calK$.

\begin{definition} \label{Delta_nPersistentGroups}
For every $0 \leq i \leq j \leq N$,
$p\ge 0$ and $n\geq 1$, the \textbf{$\bm{p^{\text{\rm \textbf{th}}}}$ $\bm{\De_n}$-persistent group} between $K_i$ and $K_j$ is the vector space
\begingroup\makeatletter\def\f@size{14}\check@mathfonts
$$ (\De_n)_p^{i,j}\left( \mathcal K \right) \coloneqq \Image {f^{i,j}_p}_{|\cap_{k=i}^j \Ker \left( \De_n^k \circ f^{i,k}_p \right)}.$$
\endgroup
These are the ${\bm A_\infty}$\textbf{-persistence groups} in terms of 
transferred $A_\infty$-coalgebras in homology.
\end{definition}

In $A_\infty$-persistence \cite{Belchi-Murillo15},
we study the evolution of vector subspaces such as 
$V_i \coloneqq \Ker {\Delta_n}_{| H_p(K_i)}$ $\subseteq H_p(K_i)$.
The problem is that the maps 
$
f^{i,i+1}_p\colon H_p(K_i)
\longrightarrow H_p(K_{i+1})
$
do not restrict, in general, to maps
$
\xymatrix{
V_i \ar[r] & V_{i+1}
}$
(see for instance 
\cite[Thm. 3.1]{Belchi-Murillo15}),
therefore \emph{not} producing a persistence module
$
\xymatrix{
V_0 
\ar[r] & V_1
\ar[r] & \ldots
\ar[r] & V_N.
}$
Despite that, we can still completely understand the  
persistent groups in Def. \ref{Delta_nPersistentGroups} by using zigzag techniques \cite{Carlsson-de_Silva10}. Indeed, here is the $A_\infty$ counterpart 
of Thm. \ref{Thm:Fundamental_thm_PH}:

\begin{theorem}\label{COR:Delta_n-bars}
\cite[Cor. 2.9, Def. 2.8]{Belchi-Murillo15},
For any pair of integers $p \geq 0$ and $n \geq 1$, 
there exists a unique
multiset $M_{p, n}$ of
intervals of the form $[i,j]$ for $0 \leq i < j \leq N$,
such that	for all \,$0 \leq i \leq j \leq N$,
the dimension
$\dim (\De_n)_p^{i,j}\left( \mathcal K \right)$
equals the number of intervals
in $M_{p, n}$ 
(counted with multiplicities)
which contain the interval $[i,j]$.

In particular, 
$\dim \Ker {\De^i_n}_{|H_p(K_i)}$
equals the number of intervals
in $M_{p, n}$ 
(counted with multiplicities)
which contain the integer $i$.
\end{theorem}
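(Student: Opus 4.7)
The plan is to follow the zigzag persistence strategy alluded to in the paragraph preceding the theorem, since the subspaces $V_i = \Ker {\De_n^i}_{|H_p(K_i)}$ do not form a persistence module in general and hence Lemma \ref{lemma_persModsSplit} cannot be applied directly. The idea is to enhance the sequence of $V_i$'s into a zigzag of finite dimensional vector spaces whose interval decomposition classifies the persistent intersection-of-kernels behaviour measured by $(\De_n)_p^{i,j}(\calK)$.

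First I would construct an auxiliary zigzag. For each $0 \leq i \leq N-1$ set
\[
W_i \coloneqq V_i \,\cap\, (f^{i,i+1}_p)^{-1}(V_{i+1}),
\]
which by definition is the largest subspace of $V_i$ on which $f^{i,i+1}_p$ restricts to a map into $V_{i+1}$. The inclusion $W_i \hookrightarrow V_i$ and the restriction $W_i \to V_{i+1}$ of $f^{i,i+1}_p$ assemble into a zigzag
\[
V_0 \;\hookleftarrow\; W_0 \;\to\; V_1 \;\hookleftarrow\; W_1 \;\to\; \cdots \;\hookleftarrow\; W_{N-1} \;\to\; V_N,
\]
to which the zigzag analogue of Gabriel's theorem used in \cite{Carlsson-de_Silva10} applies, yielding a unique decomposition into interval summands.

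Next I would translate the intervals obtained in the doubled indexing of this zigzag back into intervals $[i,j]$ in the original indexing, by grouping a zigzag interval together with its adjacent $W$-nodes and recording only its leftmost and rightmost $V$-nodes. The core calibration step is then to verify, by unwinding the definition of $W_i$, that an element $x \in H_p(K_i)$ lies in $\bigcap_{k=i}^{j} \Ker(\De_n^k \circ f^{i,k}_p)$ if and only if it can be lifted successively through the zigzag all the way to $V_j$; equivalently, $x$ lies in the sum of those interval summands whose support contains every $V$-node from $V_i$ to $V_j$. Taking images under $f^{i,j}_p$, this identifies $\dim (\De_n)_p^{i,j}(\calK)$ with the number of interval summands whose range contains $[i,j]$, proving the existence of $M_{p,n}$; uniqueness follows from the uniqueness clause in the zigzag classification.

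The main obstacle I anticipate is precisely the calibration step: showing that the lift-to-$V_j$ characterisation of elements of the persistent kernel-intersection matches the support of zigzag interval summands exactly, without losing or double-counting multiplicities. One would have to argue by induction on $j - i$ that membership in $W_i$ propagates correctly and that the restriction-of-restriction-of-\dots\ structure is compatible with the direct-sum decomposition produced by Gabriel's theorem. As an alternative route that avoids zigzag machinery altogether, one could mimic the inclusion-exclusion proof just given for Lemma \ref{lemma_persModsSplit}: set $d^{i,j} = \dim (\De_n)_p^{i,j}(\calK)$, force $N^{i,j}$ by the same alternating sum, and try to establish $N^{i,j} \geq 0$ via a Frobenius-type rank inequality applied to the restricted maps $f^{i,j}_p|_{\bigcap_k \Ker (\Delta_n^k \circ f^{i,k}_p)}$; verifying that nonnegativity in this restricted setting is the analogue of the hard step above.
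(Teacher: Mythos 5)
The paper itself offers no proof of this statement---it is imported verbatim from \cite[Cor.~2.9, Def.~2.8]{Belchi-Murillo15}---but the zigzag route you take is exactly the one the surrounding text attributes to that reference, and your reconstruction is essentially sound: the module $V_0 \hookleftarrow W_0 \to V_1 \hookleftarrow \cdots \to V_N$ with $W_i = V_i \cap (f^{i,i+1}_p)^{-1}(V_{i+1})$ decomposes into interval summands by \cite{Carlsson-de_Silva10}, and since the backward maps are injective no summand can contain a $W_i$-node without containing $V_i$, which is what makes your ``record the leftmost and rightmost $V$-nodes'' translation well defined. One imprecision in the calibration step: it is \emph{not} true that $x \in \bigcap_{k=i}^{j}\Ker(\De_n^k\circ f^{i,k}_p)$ iff $x$ lies in the sum of summands whose support contains every $V$-node from $V_i$ to $V_j$; a transportable $x$ may also have components in summands that terminate at a $W$-node strictly before $V_j$ (these pass every lifting test and then die under the next forward map). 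What you actually need, and what does hold, is that the \emph{image} of the transportable subspace in $V_j$ is the sum of the $V_j$-components of precisely the summands containing $[V_i,V_j]$, whence the dimension count; the obstructions to transport come only from summands whose support ends exactly at a $V$-node. Two further cautions. Your alternative route via Lemma \ref{Lemma_FrobeniusRanks} does not go through as stated: the four dimensions entering $N^{i,j}$ are ranks of $f^{i,j}_p$ restricted to four \emph{different} subspaces and cannot be written as $\operatorname{rank}(B)$, $\operatorname{rank}(AB)$, $\operatorname{rank}(BC)$, $\operatorname{rank}(ABC)$ for a single composable triple---this is precisely why the zigzag machinery is unavoidable here, and why \S\ref{compatibleAinftyStructures} goes to the trouble of forcing condition (\ref{eq_assumptionfKerInKer}) so that Lemma \ref{lemma_persModsSplit} applies directly. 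Finally, your construction genuinely produces summands supported on a single $V_i$ (a kernel class that neither persists nor is hit), which correspond to singleton intervals $[i,i]$; these are excluded by the ``$i<j$'' in the statement as transcribed, an issue with the statement rather than with your argument.
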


%\begin{definition}
%\label{Def:Ainfty_Barcode}
The multiset $M_{p, n}$ in Thm.
\ref{COR:Delta_n-bars}
is called the 
\textbf{$\bm{p^{\text\rm \textbf{th}}}$ 
$\bm{\Delta_n}$-barcode} of $\calK$.
%\end{definition}
The construction of this barcode involves zigzag decompositions and in \S \ref{compatibleAinftyStructures} 
we will deal with a hazard this presents.

%%%%%%%%%%%%%%%%%%%%%%%%%%%%%%%%%%%%%
%%%%%%%%%%%%%%%%%%%%%%%%%%%%%%%%%%%%%
%%%%%%%%%%%%%%%%%%%%%%%%%%%%%%%%%%%%%
\section{Topological meaning of $A_\infty$-persistence groups}
\label{sec_topologicalMeaning}

It is well known that a torus, a Klein bottle and something as simple as a wedge of spheres $\bS^1 \vee \bS^2 \vee \bS^1$ have isomorphic homology groups with $\bZ_2$ coefficients, but that the cup product tells these 3 spaces apart.
These objects are not only mathematical amusements -- they can actually model datasets. For instance, a quasi-periodic signal can produce a sliding window embedding which is dense in a torus \cite{Perea16_toroidal} and high-contrast patches in grey-scale images can produce a point cloud which is dense in a space whose 2-skeleton is a Klein bottle \cite{Carlsson-Ishkhanov-de_Silva-Zomorodian08}.
Since persistence can be used to study these datasets (as recalled in Fig. \ref{fig:DiagramaResumen}), it is therefore relevant to have a persistence approach to cup product.
A particular case of $A_\infty$-persistence yields one such approach, since the multiplication $\mu_2$ in a transferred $A_\infty$-algebra
is precisely the cup product. Inasmuch as this product is well understood, in this section we will focus on understanding what $A_\infty$-persistence can achieve with the higher operations $\Delta_n$ and $\mu_n$, for $n>2$.

The key to understanding the persistent homology groups $H^{i,j}_p(\calK)$ is to know what $\dim H^{i,i}_p(\calK)$ means. Similarly, the key to understanding $A_\infty$-persistence groups as in Definition \ref{Delta_nPersistentGroups}, $(\Delta_n)^{i,j}_p(\calK)$, is to know what $\dim (\Delta_n)^{i,i}_p(\calK)$ means.
The meaning of the former is easy to grasp, since $\dim H^{i,i}_p(\calK)$ reduces to the Betti number $\beta_p(K_i)$. The meaning of the later is a little more involved, so we now explore some of the topological information encoded in the dimension of $(\Delta_n)^{i,i}_p(\calK) = \Ker {\Delta_n}_{|H_p(K_i)}$ (and dually the dimension of the cokernel of the map ${{\mu_n}_{|H^{p_1}(K_i) \otimes \ldots \otimes
H^{p_n}(K_i)}}$). We will start by showing that, in the worst case, different $A_\infty$-structures transferred by the same space can lead to different values of 
$\dim \Ker {\Delta_n}_{|H_p(K_i)}$ if $n>2$
(Ex. \ref{prop_De_n-not-invariant}).
However, we identify situations in which this does cannot happen. In such situations, these values provide information on the homotopy type of topological spaces, 
beyond that of the standard cohomology algebra
(Thm. \ref{THM:min-invariant}, 
Cor. \ref{COR:min-invariant} and Prop. \ref{Prop:Borromean-m3} and \ref{Prop:ExtendingBorromean}).

Some results in this section involve folklore notions
of either rational homotopy theory or
homological perturbation theory, but to the best of our knowledge, 
there is no proof in the literature of any of the results we will prove here.

Let us start by stating that numbers like
$\dim \Ker {\Delta_n}_{|H_p(X)}$ can give different values depending on the chosen transferred $A_\infty$-structure.

\begin{example}\label{prop_De_n-not-invariant}
Let $X$ be wedge of the complex projective plane and a 7-sphere
$$
X=\mathbb{C} P^2 \vee \mathbb{S}^7.
$$
Then, the
reduced rational homology of $X$, 
$\widetilde H_*(X;\bQ)$,
admits two (isomorphic)
transferred $A_\infty$-coalgebra structures
$\{\Delta_n\}_n$ and $\{\Delta'_n\}_n$
such that
$$\dim \Ker {\Delta_3}
\neq 
\dim \Ker {\Delta'_3}. $$
\end{example}

This example (which we explain in detail in \S \ref{sec_appendix})
could lead some to think that 
there is no use in using
quantities such as $\dim \Ker {\Delta_n}_{|H_p(X)}$
to distinguish topological spaces, but there is no need to 
\emph{throw the baby out with the bath water.}
The rest of this section is devoted to prove that in some
circumstances these numbers can still tell apart
non-homotopically-equivalent topological spaces whose
homology groups and even cohomology algebras
are isomorphic.

We continue by stating in Thm. \ref{THM:min-invariant}
(which we prove in \S \ref{sec_appendix})
that isomorphic $A_\infty$-coalgebras induce the same numbers
$\dim \Ker \De_m$ for several values of $m$.

\begin{theorem}
\label{THM:min-invariant}
Let $ \{\De_n\}_n $ and 
$ \{\De'_n\}_n $
be two isomorphic minimal $A_\infty$-coalgebra structures
on a graded vector space $C$ such that $C_p = 0$ for all $p<0$.
Let us set
$$
k \coloneqq
\begin{cases}
 \min \{ n \,|\, \Delta_n \neq 0 \} , \qquad \text{if } \{ n \,|\, \Delta_n \neq 0 \} \neq \emptyset \\
+\infty, \qquad \text{otherwise}
\end{cases}
$$
and
$$
k' \coloneqq
\begin{cases}
 \min \{ n \,|\, \Delta'_n \neq 0 \} , \qquad \text{if } \{ n \,|\, \Delta'_n \neq 0 \} \neq \emptyset \\
+\infty, \qquad \text{otherwise.}
\end{cases}
$$
Then $k=k'$ and
$$ \dim \Ker {\De_m}_{|C_p} = 
\dim \Ker {\De'_m}_{|C_p}, $$
for all integers $m\leq k$ and $p\geq 0$.
\end{theorem}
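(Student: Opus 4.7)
The plan is to exploit the morphism identities $\text{MI}(n)$ that encode the given $A_\infty$-isomorphism $f=\{f_{(k)}\}_{k\geq 1}$ from $(C,\{\Delta_n\})$ to $(C,\{\Delta'_n\})$. The strategy is to show by induction that below level $k$ every $\Delta'_n$ must vanish, while at level $k$ the clean identity $\Delta'_k\,f_{(1)} = f_{(1)}^{\otimes k}\,\Delta_k$ holds. Since $f_{(1)}$ is a degree-zero isomorphism, this last identity will let me transport kernels across the isomorphism and read off the equality of dimensions.

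More precisely, I would prove by induction on $n$, for $2\leq n\leq k$, the pair of statements
\[
\Delta'_j = 0 \text{ for all } 1\leq j < n, \qquad \Delta'_n\,f_{(1)} = f_{(1)}^{\otimes n}\,\Delta_n.
\]
The base case $n=2$ is read off from $\text{MI}(2)$ after using the minimality $\Delta_1=\Delta'_1=0$. For the inductive step, I examine $\text{MI}(n)$ under the assumptions $\Delta_\ell=0$ for $\ell<n$ (which holds because $\ell<n\leq k$) and $\Delta'_q=0$ for $q<n$ (the inductive assumption). On the left-hand side only the summand with $q=n$ and $p=r=0$ survives, contributing $\Delta'_n\,f_{(1)}$. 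On the right-hand side the constraints $k_1+\cdots+k_\ell=n$ with each $k_j\geq 1$, combined with $\Delta_\ell=0$ for $\ell<n$, force $\ell=n$ and $k_1=\cdots=k_n=1$, leaving only $f_{(1)}^{\otimes n}\,\Delta_n$. When $n<k$ the right-hand side vanishes, so $\Delta'_n\,f_{(1)} = 0$, and the invertibility of $f_{(1)}$ forces $\Delta'_n=0$; when $n=k<+\infty$ I retain the nontrivial identity $\Delta'_k\,f_{(1)}=f_{(1)}^{\otimes k}\,\Delta_k$.

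To conclude $k=k'$, the induction already gives $\Delta'_j=0$ for all $j<k$, so $k'\geq k$. If $k<+\infty$, the combination of $\Delta_k\neq 0$ with the injectivity of $f_{(1)}$ and $f_{(1)}^{\otimes k}$ forces $\Delta'_k\neq 0$, hence $k'\leq k$; if $k=+\infty$, extending the induction indefinitely gives $\Delta'_n=0$ for every $n$, so $k'=+\infty$. For the equality of kernel dimensions, the case $m<k$ is trivial since both operations vanish. For $m=k<+\infty$, the relation $\Delta'_k\,f_{(1)}=f_{(1)}^{\otimes k}\,\Delta_k$ together with the injectivity of $f_{(1)}^{\otimes k}$ shows that a vector $v\in C_p$ satisfies $\Delta_k(v)=0$ if and only if $f_{(1)}(v)\in\Ker {\Delta'_k}_{|C_p}$; hence $f_{(1)}|_{C_p}$ restricts to a linear isomorphism between $\Ker {\Delta_k}_{|C_p}$ and $\Ker {\Delta'_k}_{|C_p}$, and these two spaces share the same finite dimension.

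The main obstacle is purely combinatorial: checking carefully that the intricate double sums in $\text{MI}(n)$ genuinely collapse to the two terms above, i.e., that no intermediate index configuration, whether mixing several higher $f_{(j)}$ on the right or pairing $\Delta'_q$ with $q<n$ on the left, escapes the successive vanishing hypotheses. The non-negativity hypothesis $C_p=0$ for $p<0$ is not used in an essential way in this approach beyond ensuring that the relevant homogeneous components are well behaved; the heart of the argument lies entirely in the morphism identities together with minimality.
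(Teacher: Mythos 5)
Your proof is correct, and it takes a genuinely different route from the paper. The paper passes to the cobar construction $\Omega C=(\widehat T(s^{-1}C),d)$, filters by word length, identifies the pages of the algebraic Milnor--Moore spectral sequence ($E_{k-1}\cong(\widehat T(s^{-1}C),d_k)$), and invokes a comparison theorem for the morphism of spectral sequences induced by the DGA isomorphism $\Omega C\to\Omega' C$ to conclude that $k=k'$ and that the truncated structures $(C,\{0,\dots,0,\De_k,0,\dots\})$ and $(C,\{0,\dots,0,\De'_k,0,\dots\})$ are isomorphic; only at the very end does it read off the identity $\De'_k f_{(1)}=f_{(1)}^{\otimes k}\De_k$ from MI$(k)$. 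You obtain that same identity, together with the vanishing $\De'_j=0$ for $j<k$, by a direct induction on the morphism identities: under minimality and the inductive vanishing hypotheses, MI$(n)$ collapses on the left to $\De'_n f_{(1)}$ (all other summands carry some $\De'_q$ with $q<n$) and on the right to $f_{(1)}^{\otimes n}\De_n$ (all other summands carry some $\De_\ell$ with $\ell<n\leq k$), and invertibility of $f_{(1)}$ then drives both the vanishing below level $k$, the equality $k=k'$, and the degreewise isomorphism of kernels. Your approach is more elementary and self-contained -- it avoids completions, filtrations and the comparison theorem entirely -- and, as you note, it does not actually use the hypothesis $C_p=0$ for $p<0$, which the paper's spectral-sequence argument does lean on; what the paper's route buys in exchange is the structural picture relating $\De_k$ to the $E_{k-1}$ page of the Milnor--Moore spectral sequence. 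The only blemishes are cosmetic: the index you call $r$ is called $k$ in the paper's MI$(i)$, and your closing isomorphism of kernels silently uses that $f_{(1)}$ has degree $0$ so that it preserves each $C_p$ -- worth stating, but immediate from the definitions.
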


Applying Thm. \ref{THM:min-invariant} to transferred $A_\infty$-structures on $C=\widetilde H_*(X)$ yields sufficient conditions for the numbers  $ \dim \Ker {\De_m}_{|\widetilde H_p(X)} $ to be homotopy invariants. We state this as the following corollary:

\begin{corollary}
\label{COR:min-invariant}
Let $ \{\De_n\}_n $ be a transferred 
$A_\infty$-coalgebra structure on the reduced homology of a space $X$, and let us set
$$
k \coloneqq
\begin{cases}
 \min \{ n \,|\, \Delta_n \neq 0 \} , \qquad \text{if } \{ n \,|\, \Delta_n \neq 0 \} \neq \emptyset \\
+\infty, \qquad \text{otherwise.}
\end{cases}
$$
Then, $k$ and the numbers
$ \dim \Ker {\De_m}_{|\widetilde H_p(X)} $ (for integers $m \leq k$ and $p\geq 0$) are
independent of the choice of transferred $A_\infty$-coalgebra structure on $\widetilde H_*(X)$. 
Moreover, since homotopy equivalent spaces induce isomorphic transferred $A_\infty$-coalgebras, $k$ and every such $ \dim \Ker {\De_m}_{|\widetilde H_p(X)} $ are invariants of the homotopy type of $X$.
\end{corollary}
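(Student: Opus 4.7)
The plan is to derive the corollary directly from Theorem \ref{THM:min-invariant}, using two facts already recorded in the excerpt: transferred $A_\infty$-coalgebras are by definition minimal, and all transferred $A_\infty$-coalgebra structures on $\widetilde H_*(X)$ induced by the same space $X$ are isomorphic (the remark immediately after Definition \ref{DEF:good-A-coalg}, which carries over verbatim to the reduced setting). So almost nothing needs to be proved beyond verifying that the hypotheses of Theorem \ref{THM:min-invariant} are met.

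For the first assertion, I would fix two transferred $A_\infty$-coalgebra structures $\{\De_n\}_n$ and $\{\De'_n\}_n$ on $C \coloneqq \widetilde H_*(X)$. Singular homology vanishes in negative degrees, so $C_p=0$ for $p<0$; both structures are minimal and, by the remark just cited, isomorphic as $A_\infty$-coalgebras. Theorem \ref{THM:min-invariant} then yields at once that $k=k'$ and
$$ \dim \Ker {\De_m}_{|\widetilde H_p(X)} = \dim \Ker {\De'_m}_{|\widetilde H_p(X)} $$
for every $m\leq k$ and every $p\geq 0$, which is exactly the independence-of-choice statement.

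For the homotopy-invariance part, let $f\colon X \to Y$ be a homotopy equivalence and pick transferred $A_\infty$-coalgebra structures $\{\De^X_n\}_n$ on $\widetilde H_*(X)$ and $\{\De^Y_n\}_n$ on $\widetilde H_*(Y)$. As pointed out in the introduction, $f$ induces an isomorphism of transferred $A_\infty$-coalgebras
$$\phi\colon (\widetilde H_*(X),\{\De^X_n\}_n) \longrightarrow (\widetilde H_*(Y),\{\De^Y_n\}_n)$$
in the sense of Definition \ref{DEF:Morph-A-coalg}. Using the linear isomorphism $\phi_{(1)}$ I would transport $\{\De^Y_n\}_n$ to a transferred $A_\infty$-coalgebra structure on $\widetilde H_*(X)$ isomorphic to $\{\De^X_n\}_n$; since $\phi_{(1)}$ is a graded linear isomorphism of $\widetilde H_*(X)$ with $\widetilde H_*(Y)$, it identifies the two integers $k$ and the kernels $\Ker {\De_m}$ dimension-for-dimension in each degree. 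The first part of the corollary then forces the invariants on $X$ and $Y$ to coincide.

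There is no substantive obstacle here: the argument is a straightforward packaging of Theorem \ref{THM:min-invariant}. The only point needing a moment of care is checking that the map induced by a homotopy equivalence is genuinely an $A_\infty$-coalgebra isomorphism in the sense of Definition \ref{DEF:Morph-A-coalg} (so that transporting via $\phi_{(1)}$ indeed produces two \emph{isomorphic} minimal $A_\infty$-structures on the \emph{same} graded vector space $\widetilde H_*(X)$, as required to invoke Theorem \ref{THM:min-invariant}).
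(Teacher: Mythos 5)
Your proposal is correct and follows essentially the same route as the paper, which presents the corollary as an immediate application of Theorem \ref{THM:min-invariant} combined with the facts that transferred structures are minimal by definition, that $\widetilde H_p(X)=0$ for $p<0$, and that any two transferred $A_\infty$-coalgebras induced by homotopy equivalent spaces (in particular by the same space) are isomorphic. The only step you flag as needing care --- transporting the structure along $\phi_{(1)}$ so that Theorem \ref{THM:min-invariant} is invoked on a single graded vector space --- is handled correctly and is consistent with how the paper uses the identity MI$(k)$ in the proof of that theorem.
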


Using Thm. \ref{THM:min-invariant}, one can infer similar statements to 
Cor. \ref{COR:min-invariant}
for transferred $A_\infty$-coalgebras
in (non-reduced) homology and dualize them to
transferred $A_\infty$-algebras in cohomology (reduced or not).
Cor. \ref{COR:min-invariant} shows $A_\infty$-operations that vanish up to a certain $n\geq1$ so that the possible
non-vanishing at level $n+1$ becomes important.
Following this idea, we now show how the numbers
$$\dim \Coker {\mu_n}_{|H^{p_1}(X) 
\otimes \ldots \otimes
H^{p_n}(X)}$$
can sometimes find extra topological information. 
We do this by using the connection between $A_\infty$-structures,
Massey products and higher linking numbers.
We will consider links in $\mathbb{S}^3$ 
(\emph{i.e.}, collections of embeddings of a circumference
$\mathbb{S}^1$
into $\mathbb{S}^3$ which do not intersect)
and embeddings of arbitrary spheres in a sphere
of a higher dimension.
Let us start with 2-component links:
the unlinked $\mathcal{U}_2$
of Fig. \ref{fig:unlinked2Rings} 
and the  Hopf link $\calH$ 
of Fig. \ref{fig:HopfLink}.    
    \begin{figure}[h!]
    \centering
    \begin{subfigure}[b]{0.2\textwidth}
        \includegraphics[width=\textwidth]{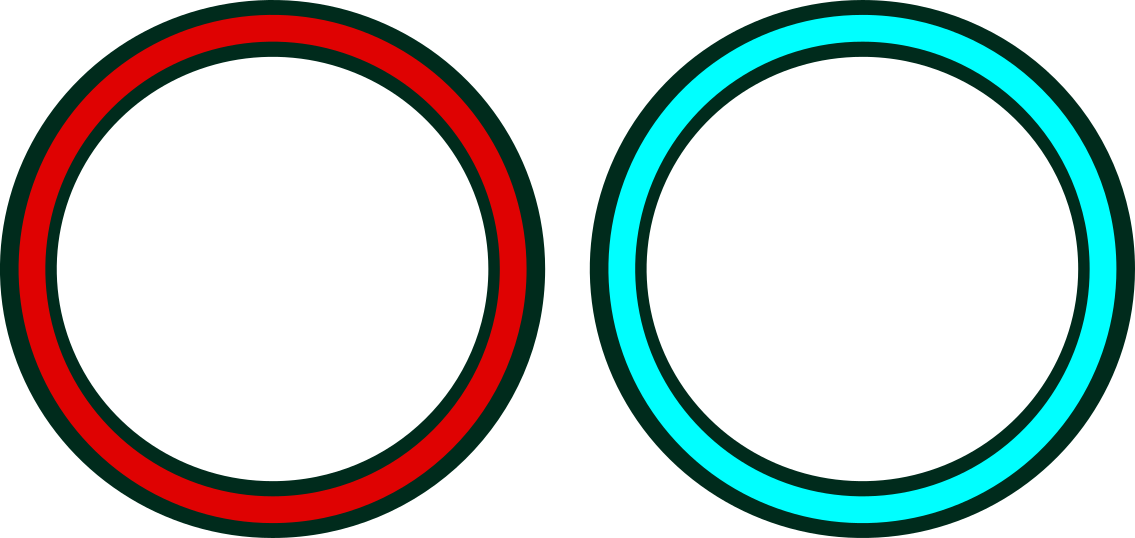}
        \caption{}
        \label{fig:unlinked2Rings}        
    \end{subfigure}
    \hspace{4cm}
    ~ %add desired spacing between images, e. g. ~, \quad, \qquad, \hfill etc. 
      %(or a blank line to force the subfigure onto a new line)
    \begin{subfigure}[b]{0.14\textwidth}
        \includegraphics[width=\textwidth]{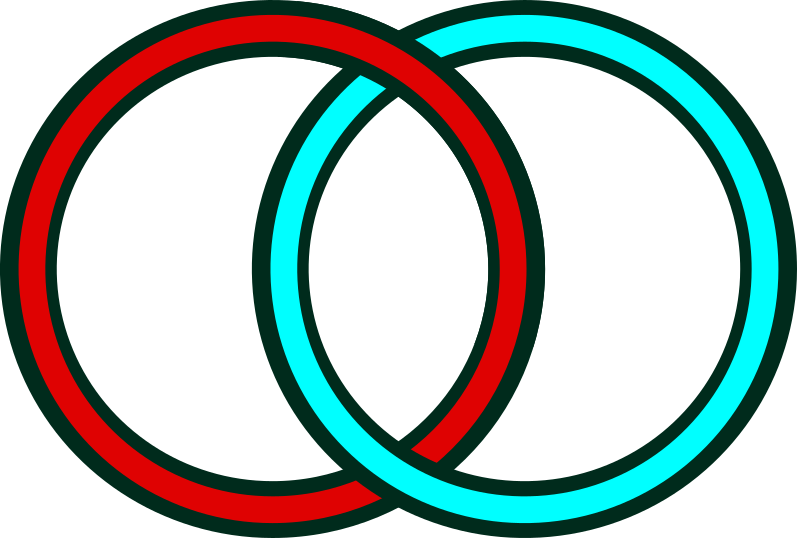}
        \caption{}
        \label{fig:HopfLink}        
    \end{subfigure}

     \caption{Betti numbers cannot distinguish the two unlinked annuli $\mathcal{U}_2$ of (a), from the Hopf link $\calH$ (two disjoint annuli linked as in (b)). However, the standard cohomology algebra on 
$H^*(\bS^3-\mathcal{U}_2)$ and $H^*(\bS^3-\calH)$ 
can tell them apart. In particular, since all the information in the stardard cohomology algebra is contained in any transferred $A_\infty$-algebra, we conclude that we can distinguish these two links by using $A_\infty$-algebras.}
    \label{fig:UnlinkedVsHopf}
\end{figure}
It is easy to see that
$ H_p(\mathcal{U}_2) \cong H_p(\calH) $
for all $p\geq 0$ and, using 
Alexander Duality,
$ H_p(\bS^3-\mathcal{U}_2) \cong H_p(\bS^3-\calH) $
holds as well.
Recall that the cup product in the complement in $\bS^3$ of two knots can detect their linking number
\cite[\S 5.D]{Rolfsen76}. In this case,  
the fact that 
$\mathcal{U}_2$ has a 0 linking number but 
$\calH$ does not, implies that
$$ 0 = \smile: H^1(\bS^3-\mathcal{U}_2) \otimes 
H^1(\bS^3-\mathcal{U}_2) \longrightarrow
H^2(\bS^3-\mathcal{U}_2),$$
whereas
$$ 0 \neq \smile: H^1(\bS^3-\calH) \otimes 
H^1(\bS^3-\calH) \longrightarrow
H^2(\bS^3-\calH).$$
Hence, the cup product, which we recall is the operation
$\mu_2$ in any 
transferred 
$A_\infty$-algebra $\{\mu_n\}_{n \geq 1}$,
can tell apart these two links.

Let us look now at 
two 3-component links:
the unlinked $\mathcal{U}_3$
of Fig. \ref{fig:Unlinked_Ink}
and the  Borromean rings $\mathcal{B}$
of Fig. \ref{fig:Borromean_Ink}. The latter satisfy that if we remove any of
the rings, the other two become unlinked, but the three of them considered at once cannot be pulled apart.
It is easy to see that
$H_p(\mathcal{U}_3) \cong H_p(\mathcal{B}) $
and
$ H_p(\bS^3-\mathcal{U}_3) \cong H_p(\bS^3-\mathcal{B}) $
holds for all $p\geq 0$.
Moreover, for both $\mathcal{U}_3$ and $\mathcal{B}$,
if we remove one of the 
circumferences, 
the other two become unlinked.
Hence, since the remaining two circumferences
are unlinked, 
their linking number is 0 and 
the cup product of
the Alexander duals 
of those fundamental classes vanishes.
The fact that this pairwise unlinkedness holds for the three circumferences
implies that the cup product
$$
\smile: H^1(X) \otimes 
H^1(X) \longrightarrow
H^2(X)
$$
is 0 for both 
$X = \bS^3-\mathcal{U}_3$ and
$X = \bS^3-\mathcal{B}$. Indeed, we
need a 3-fold operation such as the triple Massey product
to detect the 3-fold linkedness of the
Borromean rings.

The triple Massey product \cite{Uehara-Massey57} on the cohomology
algebra of a space, 
$(H^*(X), \smile)$,
is defined only for those triples of classes
$\alpha \in H^p(X),
\beta \in H^q(X),
\gamma \in H^r(X)$ such that
$$ \alpha \smile \beta = 0
\hspace{10mm}
\text{and}
\hspace{10mm}
\beta \smile \gamma = 0,$$
for which such triple Massey product 
$\left\langle \alpha, \beta, \gamma \right\rangle$
is a specific \emph{subset}
$$\left\langle \alpha, \beta, \gamma \right\rangle \subseteq H^{p+q+r-1}(X).$$

\begin{figure}[h]
    \centering
    \begin{subfigure}[b]{0.2\textwidth}
        \includegraphics[width=\textwidth]{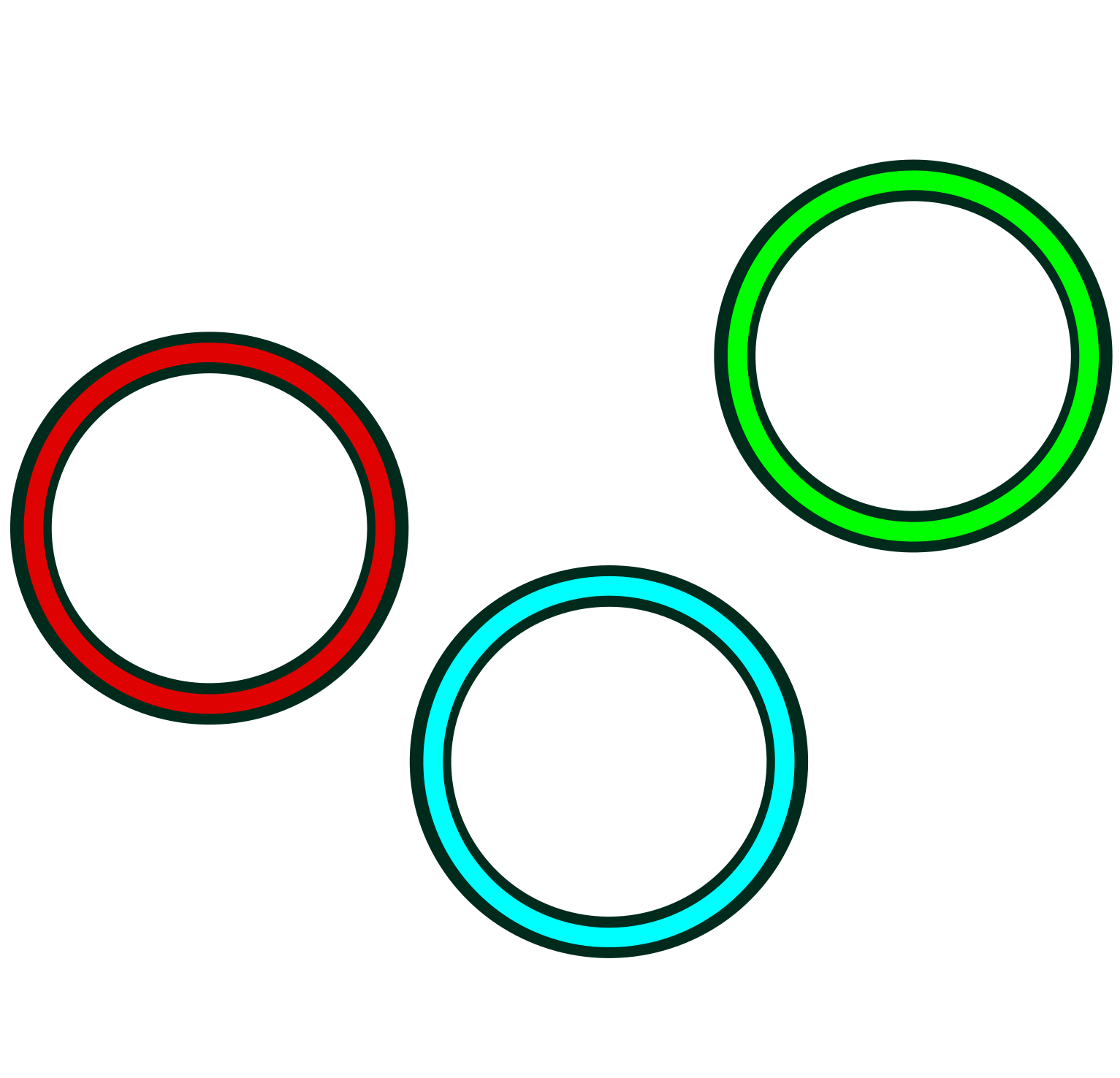}
        \caption{}
        \label{fig:Unlinked_Ink}        
    \end{subfigure}
    \hspace{4cm}
    ~ %add desired spacing between images, e. g. ~, \quad, \qquad, \hfill etc. 
      %(or a blank line to force the subfigure onto a new line)
    \begin{subfigure}[b]{0.2\textwidth}
        \includegraphics[width=\textwidth]{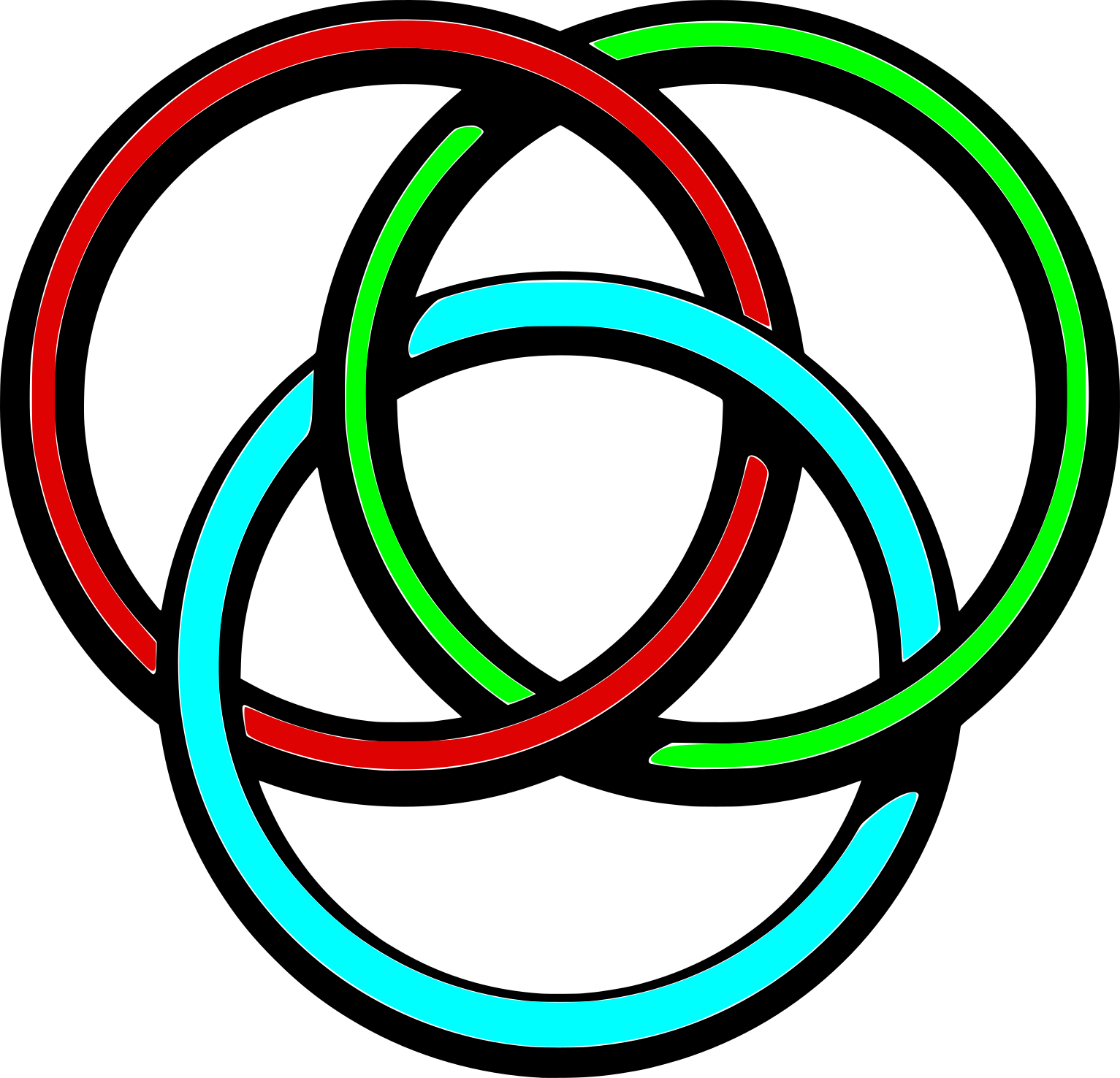}
        \caption{}
        \label{fig:Borromean_Ink}        
    \end{subfigure}

    \caption{Betti numbers and the cup product cannot distinguish the three unlinked annuli $\mathcal{U}_3$ of (a), from the Borromean rings $\mathcal{B}$ (three disjoint annuli linked as in (b)). However, using $A_\infty$-algebras on 
$H^*(\bS^3-\mathcal{U}_3)$ and $H^*(\bS^3-\mathcal{B})$ can tell them apart (see
Prop. \ref{Prop:Borromean-m3}).}
    \label{fig:UnlinkedVsBorromean}
\end{figure}

The second ingredient we need to prove
Thm. \ref{Prop:Borromean-m3} is an interesting relation between
$A_\infty$-structures and Massey products that tells us that,
given a topological space $X$,
the higher $A_\infty$-multiplication
$\mu_3$ on any transferred $A_\infty$-algebra $\{\mu_n\}_n$ on $H^*(X)$ 
computes Massey products up to a sign:

\begin{theorem}
\cite[\S 12]{Stasheff70}, \cite[Thm 3.1, Cor A.5]{Lu-Palmieri-Wu-Zhang09}.
\label{m_n-compute-Massey}
For any transferred $A_\infty$-algebra
$\{\mu_n\}_n$
on the cohomology of a space $X$,
if $\alpha, \beta, \gamma \in H^*(X)$
are cohomology classes for which
the triple Massey product
$ \left\langle \alpha, \beta, \gamma \right\rangle $
is defined,
then
$$ (-1)^b \mu_3 \left( \alpha \otimes \beta
\otimes \gamma \right)
\in \left\langle \alpha, \beta, \gamma \right\rangle,$$
where 
$b = 1+ |\beta|.$
\end{theorem}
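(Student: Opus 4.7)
The plan is to use the explicit formula for $\mu_3$ provided by the Homotopy Transfer Theorem (the dual of Remark \ref{Rmk_HTT}) and to match it, term by term, with a defining cocycle of the triple Massey product. Concretely, fix a contraction $(\iota, \pi, \phi)$ from the singular cochain DGA $(C^*(X), d, \smile)$ onto $(H^*(X), 0, \mu_2)$, so that $\iota$ picks cocycle representatives, $\pi$ extracts cohomology classes, and $\phi$ is a chain homotopy satisfying $d\phi + \phi d = \iota\pi - \mathrm{id}$, together with the side conditions $\pi\iota = \mathrm{id}$, $\pi\phi = \phi\iota = \phi^2 = 0$.

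First I would write out the tree formula given by HTT. In cohomological conventions, $\mu_3$ is a signed sum over the two planar binary rooted trees with three leaves, obtained by decorating leaves with $\iota$, internal edges with $\phi$, internal vertices with the cup product $\smile$, and the root with $\pi$:
\begin{equation*}
\mu_3(\alpha, \beta, \gamma) \;=\; \pm\,\pi\bigl(\phi(\iota\alpha \smile \iota\beta) \smile \iota\gamma\bigr) \;\pm\; \pi\bigl(\iota\alpha \smile \phi(\iota\beta \smile \iota\gamma)\bigr).
\end{equation*}

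Next, I would exploit the hypotheses $\alpha \smile \beta = 0$ and $\beta \smile \gamma = 0$. Since $\iota\alpha \smile \iota\beta$ is a cocycle with trivial cohomology class, the side condition applied to it reads $\iota\alpha \smile \iota\beta = d\phi(\iota\alpha \smile \iota\beta)$ (up to sign), so that $u \coloneqq \pm \phi(\iota\alpha \smile \iota\beta)$ is an honest primitive of $\iota\alpha \smile \iota\beta$; analogously $v \coloneqq \pm \phi(\iota\beta \smile \iota\gamma)$ is a primitive of $\iota\beta \smile \iota\gamma$. Substituting into the tree formula, the resulting cochain inside $\pi(\,\cdot\,)$ is, up to sign, $u \smile \iota\gamma \pm \iota\alpha \smile v$, which is precisely a representative of an element of $\langle \alpha, \beta, \gamma \rangle$; taking cohomology classes then yields the asserted containment.

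The hard part is the sign bookkeeping: both the HTT tree formula and the Massey product definition carry Koszul signs depending on the degrees of the inputs and on the intrinsic degrees of $\phi$ (which is $+1$) and $\mu_3$ (which is $-1$). I would commit to the $A_\infty$-sign convention of Definition \ref{DEF:A-algebra} and to the Massey product normalisation $du = (-1)^{|\alpha|+1}\iota\alpha \smile \iota\beta$, $dv = (-1)^{|\beta|+1}\iota\beta \smile \iota\gamma$, and then verify by direct computation of the coefficients in the two tree contributions that they combine into a single global factor of the form $(-1)^{1+|\beta|}$ multiplying a canonical representative of $\langle \alpha, \beta, \gamma \rangle$. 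An independent consistency check can be obtained by expanding the Stasheff identity $\mathrm{SI}(3)$ transferred across the contraction, which forces $d\mu_3 \pm \mu_3(d\otimes 1 \otimes 1 + \ldots) = \mu_2(\mu_2 \otimes 1) - \mu_2(1 \otimes \mu_2)$ on the cochain side and thus fixes the sign uniquely; carrying out this verification is where the bulk of the bookkeeping lies, but the overall structure of the argument is unambiguous.
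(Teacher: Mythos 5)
First, note that the paper does not prove Theorem \ref{m_n-compute-Massey}: it is imported from \cite[\S 12]{Stasheff70} and \cite[Thm 3.1, Cor A.5]{Lu-Palmieri-Wu-Zhang09}, so there is no internal proof to compare against, and your proposal has to stand on its own. Its core is sound and is indeed the standard argument: for a contraction $(\iota,\pi,\phi)$ of $C^*(X)$ onto $H^*(X)$, the identity $d\phi+\phi d=\iota\pi-\mathrm{id}$ applied to the cocycle $\iota\alpha\smile\iota\beta$, together with $\pi(\iota\alpha\smile\iota\beta)=\mu_2(\alpha\otimes\beta)=\alpha\smile\beta=0$, makes $-\phi(\iota\alpha\smile\iota\beta)$ an honest primitive, and the two-tree formula for $\mu_3$ then produces a cocycle belonging to a defining system for $\langle\alpha,\beta,\gamma\rangle$ (you should also say explicitly that $\pi$ of a cocycle is its cohomology class under the identification induced by the contraction, which again uses $z-\iota\pi z=-d\phi z$).

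There are two caveats. The substantive one: your argument only covers transferred $A_\infty$-algebras that are literally the output of the Homotopy Transfer Theorem for some contraction satisfying the side conditions. The theorem, with the paper's Definition \ref{DEF:good-A-coalg} dualised, is about \emph{any} minimal $A_\infty$-algebra on $H^*(X)$ quasi-isomorphic to the cochain DGA, and such a structure does not come equipped with a preferred $\phi$. You must either (i) run the argument directly from an arbitrary $A_\infty$-quasi-isomorphism $f\colon (H^*(X),\{\mu_n\})\to (C^*(X),d,\smile)$, using MI$(2)$ to see that $f_{(2)}(\alpha\otimes\beta)$ is a primitive of $f_{(1)}\alpha\smile f_{(1)}\beta$ and MI$(3)$ to identify $f_{(1)}\mu_3(\alpha\otimes\beta\otimes\gamma)$ with a Massey representative up to the exact term $df_{(3)}(\alpha\otimes\beta\otimes\gamma)$ --- this is precisely the route of \cite[Thm 3.1]{Lu-Palmieri-Wu-Zhang09} and subsumes your tree formula; or (ii) supplement your proof with the observation that an isomorphism of minimal $A_\infty$-algebras with identity linear part changes $\mu_3(\alpha\otimes\beta\otimes\gamma)$ only by $\pm\,\alpha\smile f_{(2)}(\beta\otimes\gamma)\pm f_{(2)}(\alpha\otimes\beta)\smile\gamma$, an element of the indeterminacy subgroup, so membership in the coset $\langle\alpha,\beta,\gamma\rangle$ is unaffected. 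The lesser caveat is that the sign $(-1)^{1+|\beta|}$ is the actual content of the statement beyond ``some representative lies in the Massey product'', and your proposal defers exactly that computation; as written it establishes $\pm\,\mu_3(\alpha\otimes\beta\otimes\gamma)\in\langle\alpha,\beta,\gamma\rangle$ rather than the signed claim.
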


Despite any two transferred $A_\infty$-algebras on $H^*(X)$
being isomorphic, their higher multiplications
$\mu_3$ may behave slightly differently
(as seen in the dual scenario in Ex. \ref{prop_De_n-not-invariant}),
but this result gives us ways to take 
advantage of some particular choices of $\mu_3$
as in the following proposition.

\begin{proposition}
\label{Prop:Borromean-m3}
Let \,$\mathcal{U}_3$ and $\mathcal{B}$ be as above, 
the 3-component trivial unlinked link 
and the Borromean rings, respectively.
Then,  
$H^* \coloneqq H^*(\bS^3-\mathcal{U}_3)$
admits a
transferred $A_\infty$-algebra structure
$\{\mu_n\}_n$ 
such that
$$ {\mu_3}_{|H^1 \otimes H^1 \otimes H^1} = 0,$$
whereas $H^* \coloneqq H^*(\bS^3-\mathcal{B}_3)$ does not.
\end{proposition}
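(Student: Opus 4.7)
The plan is to handle the two claims of the proposition separately: for $\mathcal{U}_3$ I would exploit the very simple homotopy type of its complement, while for $\mathcal{B}$ I would combine Theorem \ref{m_n-compute-Massey} with the classical non-triviality of the Massey product that detects the Borromean linking.

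First I would observe that, as $\mathcal{U}_3$ consists of three disjoint unknotted and unlinked circles, the complement $\bS^3 - \mathcal{U}_3$ deformation retracts onto a wedge of spheres, namely $\bS^1 \vee \bS^1 \vee \bS^1 \vee \bS^2 \vee \bS^2$ (this is consistent with Alexander duality, which forces the cohomology groups to match those of the wedge). Wedges of spheres are formal: there is a DGA-quasi-isomorphism connecting the singular cochain DGA of $\bS^3 - \mathcal{U}_3$ to the cohomology algebra $(H^*(\bS^3 - \mathcal{U}_3), d=0, \smile)$ viewed as a DGA with zero differential. I would then apply the Homotopy Transfer Theorem (Remark \ref{Rmk_HTT}) to this DGA in the degenerate set-up $M = N = H^*(\bS^3 - \mathcal{U}_3)$, $\pi = \iota = \id$, $\phi = 0$: since $\phi = 0$, the dual to the statement in Remark \ref{Rmk_HTT} delivers a transferred $A_\infty$-algebra structure with $\mu_2 = \smile$ and $\mu_n = 0$ for every $n > 2$. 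In particular, ${\mu_3}_{|H^1 \otimes H^1 \otimes H^1}$ vanishes.

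Second, for the Borromean rings, let $\alpha, \beta, \gamma \in H^1(\bS^3 - \mathcal{B})$ denote the Alexander duals of the fundamental classes of the three components. As any two components form a trivial 2-component link once the third is removed, the corresponding linking numbers are zero and the cup products $\alpha \smile \beta$, $\beta \smile \gamma$, $\alpha \smile \gamma$ all vanish (as discussed in the paper for the Hopf-link comparison). Therefore the triple Massey product $\langle \alpha, \beta, \gamma \rangle \subseteq H^2(\bS^3 - \mathcal{B})$ is well-defined, and its indeterminacy $\alpha \smile H^1 + H^1 \smile \gamma$ is zero since \emph{every} cup product on $H^1$ vanishes. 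Hence $\langle \alpha, \beta, \gamma \rangle$ consists of a single cohomology class, which is non-zero; this is the classical observation --- going back to Massey and Milnor --- that this Massey product captures the 3-fold linking distinguishing $\mathcal{B}$ from $\mathcal{U}_3$. Applying Theorem \ref{m_n-compute-Massey} to any transferred $A_\infty$-algebra $\{\mu_n\}_n$ on $H^*(\bS^3 - \mathcal{B})$ then yields $\pm \mu_3(\alpha \otimes \beta \otimes \gamma) \in \langle \alpha, \beta, \gamma \rangle$, forcing $\mu_3(\alpha \otimes \beta \otimes \gamma) \neq 0$ and hence ${\mu_3}_{|H^1 \otimes H^1 \otimes H^1} \neq 0$.

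The main delicate point I anticipate is justifying rigorously the formality of $\bigvee \bS^{n_i}$ over the arbitrary coefficient field $\bF$ fixed in the paper, rather than merely quoting it. Over $\bQ$ this is standard via Sullivan minimal models, but for general $\bF$ the cleanest route is to exhibit a direct DGA quasi-isomorphism between a small cochain-level model of the wedge (e.g., the cellular one) and its cohomology algebra, using that the only non-trivial products on a wedge of positive-dimensional spheres come from the unit. Everything else in the argument --- the Massey-product computation, the vanishing of the indeterminacy, and the invocation of Theorem \ref{m_n-compute-Massey} --- is essentially mechanical.
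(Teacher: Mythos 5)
Your proposal is correct and follows essentially the same route as the paper: for $\mathcal{U}_3$ the paper likewise passes to the wedge $\bS^2 \vee \bS^2 \vee \bS^1 \vee \bS^1 \vee \bS^1$ and applies the Homotopy Transfer Theorem with $\phi = 0$ to a cochain model with zero differential (the cellular one, which is exactly the ``direct quasi-isomorphism'' you suggest as the clean fix for formality over a general field), and for $\mathcal{B}$ it invokes Massey's non-vanishing of $\langle \alpha,\beta,\gamma\rangle$ together with Theorem \ref{m_n-compute-Massey}. Your extra observation that the indeterminacy vanishes is not needed, since the cited result already gives $0 \notin \langle \alpha,\beta,\gamma\rangle$.
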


\begin{proof}
Let us start with the unlinked $\mathcal{U}_3$.
Using stereographic projection, it is easy to see that
$\bS^3 - \mathcal{U}_3$ is homotopy equivalent
to $\bR^3$ minus 
two circumferences and an infinitely long straight line, 
where these 3 objects sit in 3 disjoint regions of $\bR^3$.
The space we just described is, in turn, homotopy equivalent to 
the wedge of spheres 
$$ X = \bS^2 \vee \bS^2 \vee \bS^1 \vee 
\bS^1 \vee \bS^1.$$
As any wedge of spheres,
$X$ admits a
CW decomposition that yields
a cellular cochain complex with trivial coboundary operator
$(C^{*}(X), \delta = 0)$.
Thus,
$ C^{p}(X) \cong H^{p}(X) $
for all $p\geq 0$ and
we can therefore choose
$\pi$ and $\iota$ to be isomorphisms 
that make $\phi$ to be the zero map
in a diagram dual to (\ref{ecuacion1}),
	$$
	\xymatrix{ \ar@(ul,dl)@<-5.5ex>[]_\phi & C^{*}(X) \ar@<0.75ex>[r]^-\pi & H^{*}(X). \ar@<0.75ex>[l]^-\iota }
	$$
Applying Remark \ref{Rmk_HTT} in terms of $A_\infty$-algebras produces
a transferred $A_\infty$-algebra 
$\{\mu_n\}_n$ such that
$\mu_n = 0$ for all $n\neq 2$.
Therefore, $H^*(\bS^3-\mathcal{U}_3)$
admits a
transferred $A_\infty$-algebra
$\{\mu_n\}_n$ 
such that
$ \mu_3 = 0.$

Let us look now at the Borromean rings
$\mathcal{B}$.
On the one hand,
W. S. Massey proved in \cite{Massey68/98} 
that the Alexander dual of the fundamental classes
of the Borromean rings, 
which form a basis $\{\alpha, \beta, \gamma \}$ of 
$H^1(\bS^3-\mathcal{B})$,
satisfy 
$$ 0 \notin \left\langle \al, \be, \gamma \right\rangle. $$
On the other hand, 
Thm. \ref{m_n-compute-Massey}
tells us that
for any 
transferred $A_\infty$-algebra 
$\{\mu_n\}_n$ on $H^*(\bS^3 - \mathcal{B})$,
the following holds:
$$ \mu_3 \left( \al \otimes \be
\otimes \gamma \right)
\in \left\langle \al, \be, \gamma \right\rangle.$$
Both things together yield the conclusion
$ \mu_3 \left( \al \otimes \be
\otimes \gamma \right) \neq 0.$
\end{proof}

Let us end this section by proving one last result,
extending Prop.
\ref{Prop:Borromean-m3}, 
and mentioning that other examples
using
$ {\mu_n}_
{| H^{p_1}(X) \otimes \ldots \otimes H^{p_n}(X)}
$
for other values of $n, p_1, \ldots, p_n$
can be analogously obtained.

Let 
$m=p+q+r$,
$x=(x_1, \ldots, x_p),
y=(y_1, \ldots, y_q),$
and $z=(z_1, \ldots, z_r)$.
Consider three spaces $S_1, S_2, S_3$ 
in 
$\bR^m$
(homeomorphic to three spheres of different dimensions) determined by the
pairs of equations
\begin{align*}
\text{\rm{$(q+r-1)$-dimensional sphere $S_1\colon$}} & &
x=0, & &
\left\| y \right\|^2 + 
\frac{\left\| z \right\|^2}{4} =1 \\
\text{\rm{$(p+r-1)$-dimensional sphere $S_2\colon$}} & &
y=0, & &
\left\| z \right\|^2 + 
\frac{\left\| x \right\|^2}{4} =1 \\
\text{\rm{$(p+q-1)$-dimensional sphere $S_3\colon$}} & & 
z=0, & &
\left\| x \right\|^2 + 
\frac{\left\| y \right\|^2}{4} =1
\end{align*}
Notice that the case $p=q=r=1$ yields Borromean
rings.

Recall that the Jordan-Brower separation theorem
tells us that for any space $X$ homeomorphic to 
$\bS^{m-1}$, the complement
$\bR^m-X$ consists of two path components --
the \emph{inside} and the \emph{outside}
of $X$. Any two of the spheres $S_i$
are separated by a space homeomorphic
to $\bS^{m-1}$, which plays an analogous role to 
pairwise unlinkedness for knots. For instance,
$S_1$ lies outside the sphere
$$
\frac{\left\| x \right\|^2}{3^2} + 
\frac{\left\| y \right\|^2}{(1/2)^2} + 
\frac{\left\| z \right\|^2}{(3/2)^2} = 1,
$$
while $S_3$ lies inside it.
Now we can use Alexander duality
in the exact same way as before.
Letting $K$ be the disjoint union of
$S_1, S_2$ and $S_3$,
consider the cohomology classes
${\alpha \in \widetilde H^p(\bS^m - K)
\cong} \widetilde H_{q+r-1}(K),
{\beta \in \widetilde H^q(\bS^m - K)
\cong} \widetilde H_{p+r-1}(K),$ and
${\gamma \in \widetilde H^r(\bS^m - K)
\cong}$\\
\noindent $ \widetilde H_{p+q-1}(K),$
corresponding to the dual of the fundamental classes of $S_1,S_2$ and $S_3$.
The fact that these spheres can be separated
as explained makes the pairwise
cup product of these cohomology classes
vanish, so that their triple Massey product is defined.
Furthermore, W. S. Massey also proved in
\cite[\S 4]{Massey68/98} that
this Massey product cannot contain the 0 cohomology class.
Hence, the following result follows from 
using the proof of Prop. \ref{Prop:Borromean-m3}
\emph{mutatis mutandis}:

\begin{proposition}
\label{Prop:ExtendingBorromean}
With the notation above, 
any transferred $A_\infty$-algebra
$\{\mu_n\}_n$ 
on $H^*(\bS^m - K)$
will have
$$ {\mu_3}_{|H^p \otimes H^q \otimes H^r} \neq 0,$$
where $H^k$ denotes $H^k(\bS^m - K)$.
\end{proposition}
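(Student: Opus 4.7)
The plan is to mimic the proof of Proposition \ref{Prop:Borromean-m3} verbatim, replacing the Borromean rings by the higher-dimensional configuration $K = S_1 \sqcup S_2 \sqcup S_3 \subset \bR^m \subset \bS^m$ and relying on the setup built in the paragraph just before the statement. The argument has two distinct inputs: Massey's non-triviality result for the triple Massey product $\langle \alpha, \beta, \gamma \rangle$, and Theorem \ref{m_n-compute-Massey} linking $\mu_3$ to Massey products.

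First, I would fix $\{\mu_n\}_n$ to be an arbitrary transferred $A_\infty$-algebra on $H^*(\bS^m - K)$ and recall the classes $\alpha \in H^p$, $\beta \in H^q$, $\gamma \in H^r$ (with $H^k \coloneqq H^k(\bS^m - K)$) obtained via Alexander duality as duals to the fundamental classes of $S_1, S_2, S_3$, as already described in the paragraph preceding the statement. Then I would verify that the triple Massey product $\langle \alpha, \beta, \gamma \rangle$ is defined. This is the pairwise-separation step: since any two of the $S_i$ are separated by an embedded $(m-1)$-sphere (as exemplified in the excerpt by the explicit ellipsoid separating $S_1$ from $S_3$), the cup products $\alpha \smile \beta$ and $\beta \smile \gamma$ both vanish, because for each pair the cohomology classes can be represented by forms/cocycles supported in disjoint regions of $\bS^m - K$.

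Next, I would invoke Massey's theorem \cite[\S 4]{Massey68/98} (already cited in the excerpt) which asserts that $0 \notin \langle \alpha, \beta, \gamma \rangle$ under this geometric configuration. Combining this with Theorem \ref{m_n-compute-Massey}, which gives
\[
(-1)^{b}\,\mu_3(\alpha \otimes \beta \otimes \gamma) \in \langle \alpha, \beta, \gamma \rangle,
\]
where $b = 1+|\beta|$, we immediately deduce $\mu_3(\alpha \otimes \beta \otimes \gamma) \neq 0$, and therefore the restriction ${\mu_3}_{|H^p \otimes H^q \otimes H^r}$ is not the zero map. Since $\{\mu_n\}_n$ was arbitrary, the conclusion holds for every transferred $A_\infty$-algebra on $H^*(\bS^m-K)$.

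The only substantive step, and therefore the main potential obstacle, is the verification of pairwise vanishing of cup products; everything else is a direct citation. However, this step is handled uniformly by the Jordan--Brouwer separation argument already sketched in the excerpt (any two spheres $S_i, S_j$ lie on opposite sides of an embedded $(m-1)$-sphere in $\bS^m-K$), so that each pairwise duality pairing factors through the cohomology of a contractible region. No new calculation beyond this geometric observation and the two cited theorems is required.
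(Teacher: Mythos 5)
Your proposal is correct and follows essentially the same route as the paper, which proves the Borromean case (Prop. \ref{Prop:Borromean-m3}) via the vanishing of the pairwise cup products, Massey's result that $0 \notin \left\langle \alpha, \beta, \gamma \right\rangle$, and Thm. \ref{m_n-compute-Massey}, and then declares Prop. \ref{Prop:ExtendingBorromean} to follow \emph{mutatis mutandis} from the separation argument sketched before the statement. Nothing in your argument departs from that plan.
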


As we explained, the triple Massey product is defined whenever certain
2-fold products vanish (namely, when the cup products are 0).
Similarly,
for every $n\geq 4$ there is also an $n$-fold
Massey product \cite{Massey58} defined
whenever the $(n-1)$-fold Massey products vanish
(in the sense that those sets contain the 0 cohomology class).
On the other hand, the operation $\mu_n$ of a transferred $A_\infty$-algebra computes $n$-fold Massey product similarly to the case $n=3$
shown in Thm. \ref{m_n-compute-Massey}
(see \cite[Thm 3.1, Cor A.5]{Lu-Palmieri-Wu-Zhang09} and more recently \cite{Pepe17}).
Hence,
one can think of results similar to
Prop. \ref{Prop:Borromean-m3}
and \ref{Prop:ExtendingBorromean}
for $n$ bodies instead of just 3, 
which will involve information about 
the $A_\infty$-operation $\mu_n$.
There is also a 
relation between $L_\infty$-algebras 
and higher order Whitehead products of a similar
flavour to that of $A_\infty$-algebras
and higher order Massey products.
The interested reader can explore this in	
\cite[Thm. V.7(7)]{Tanre83} and more recently in
\cite{Belchi-Buijs-Moreno-Murillo17}.

Finally, notice that results such as Prop. \ref{Prop:Borromean-m3}
and \ref{Prop:ExtendingBorromean}
can be exploited by $A_\infty$-persistence
to study datasets with an underlying (low or high-dimensional) link structure. 
%Other approaches on combining persistence with knots and links can be found in  
%\cite{Brendel-Dlotko-EllisEtAl15, Edelsbrunner-Zomorodian03}.
% I can remove this last 2 citations if the reference list is too long.
Also, if we know in which particular context we are working on, we can use the results in this section to choose the right $n$ to focus on when using $A_\infty$-persistence.

%%%%%%%%%%%%%%%%%%%%%%%%%%%%%%%%%%%%%
%%%%%%%%%%%%%%%%%%%%%%%%%%%%%%%%%%%%%
%%%%%%%%%%%%%%%%%%%%%%%%%%%%%%%%%%%%%
\section{Compatible $A_\infty$-structures to avoid zigzag ambiguities}
\label{compatibleAinftyStructures}

Given the sequence of topological spaces and continuous maps
$$
\xymatrix{
\calK\colon  & 
K_0
\ar[r] & K_1
\ar[r] & \ldots
\ar[r] & K_{N},
}$$
let 
$\{\De_m^i\}_{m\geq1}$ 
denote a transferred $A_\infty$-coalgebra on $H_*(K_i)$,
for each $0 \leq i \leq N$,
and let $n \geq 1$ and $p \geq 0$
be two integers. Once all this is fixed, to simplify notation, write
$\Delta^i$ to represent the restriction of ${\Delta_n^i}$ to ${H_p(K_i)}$.
Let us look at a particular example for a moment. 

\begin{example}
\label{Ex_2alignedBars}
Let us assume
that $N=999$, 
$\Ker \Delta^{500} = \{0\}$
and that there exists a homology class
$\alpha \in H_p(K_0)$ such that
$$
0 \neq f^{0,j}\alpha \in \Ker \Delta^j, \text{ for all } j \neq 500, 
\hspace{10mm} \text{and} \hspace{10mm} 
0 \neq f^{0,500}\alpha \notin \Ker \Delta^{500}.
$$
Notice that a homology class can indeed behave like this,
as seen in 
\cite[\S3]{Belchi-Murillo15}. 
It is straightforward to check that 
the existence of the class $\alpha$,
whose image by $f^{0,j}$ falls into $\Ker \Delta^j - \{0\}$ at $99.9\%$ of the terms
$K_j$ forming $\mathcal{K}$, results in the creation,
in the $p^\text{th}$ $\Delta_n$-barcode of $\calK$, of two (apparently unrelated) intervals $I=[0,499]$ and $J=[501,999]$,
% (see Fig. \ref{fig_bars0to499_501to999}), 
each containing at most 500 integers, which amounts to only $50\%$ of the terms in $\mathcal{K}$.
%\caption{A class $\Delta_n$-awake
% at $99.9\%$ of the terms that form $\mathcal{K}$
% results in two (apparently unrelated) intervals in this barcode, each containing 
% only at most $50\%$ of the number of terms that form $\mathcal{K}}$.
\end{example}

Ex. \ref{Ex_2alignedBars} shows that 
the $\Delta_n$-persistent groups and their corresponding barcodes
are not as faithful as we would like in describing
the evolution of subspaces such as $\Ker \De_n$
along
$\calK$.
This is due to the need of using
zigzag persistence \cite{Carlsson-de_Silva10} in the construction of the $\Delta_n$-barcodes.
One way to avoid this issue is to 
choose sequences $\calK$ and 
transferred $A_\infty$-structures along $\calK$ in a coherent 
manner to allow us to compute $A_\infty$-persistence via persistence modules
rather than via more general zigzag modules.
In terms of the $p^\text{th}$ $\Delta_n$-barcode
in homology, 
this boils down to being able to compute
a transferred $A_\infty$-coalgebra
$\{\Delta^i_m\}_m$ on $H_*(K_i)$, for each $K_i$ in $\calK$,
such that the following will hold for every $0 \leq i < N \colon$
\begin{align}
\label{eq_assumptionfKerInKer}
f_p^{i,i+1} \left( \Ker \De^i \right)
\subseteq \Ker \De^{i+1}.
\end{align}
In this section we show a way to construct filtrations $\calK$
and the corresponding transferred $A_\infty$-coalgebras
so that assumption (\ref{eq_assumptionfKerInKer}) holds
(see Thm. \ref{THM:Coherent-choices-Quillen}), and prove the
barcode decomposition theorem of $A_\infty$-persistence
for the case of such compatible choices of $A_\infty$-structures
(see Thm. \ref{Thm_Fund_Thm_A_infty-p_Functorial})
to stress how much things simplify, even at a theoretical level, by bypassing 
the need for zigzag machinery.

Here is a construction exhibiting an example
of a choice of $A_\infty$-structures that make assumption (\ref{eq_assumptionfKerInKer}) hold.

\begin{theorem}
\label{THM:Coherent-choices-Quillen}
Let $K_0$ be a 1-connected
CW complex and let $N > 0$ be an integer. 
For
all $0<i\leq N$,
let $K_i$ denote the wedge $K_{i-1} \vee \mathbb{S}^{n_i}$, for some $n_i > 1$.
Then there exists
a transferred $A_\infty$-coalgebra
$\left( \widetilde H_*(K_i; \bQ), \{ \De_n^i \}_n
\right)$
on the reduced rational homology of $K_i$, for each $0\leq i \leq N$,
 such that the following inclusion holds for every $n\geq 1$,
 $p\geq 0$ and $0 \leq i < N$:
$$f_p^{i,i+1} \left( \Ker \De^i \right)
\subseteq \Ker \De^{i+1},$$
where $\Delta^i$ denotes the restriction of ${\Delta_n^i}$ to 
${\widetilde H_p(K_i; \bQ)}$
and $f_p^{i,i+1}: \widetilde H_p(K_i; \bQ) \longrightarrow \widetilde H_p(K_{i+1}; \bQ)$ denotes the map induced by the inclusion $K_i \xhookrightarrow{} K_{i+1}$.
\end{theorem}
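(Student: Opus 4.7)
The plan is to construct the required transferred $A_\infty$-coalgebras via the Homotopy Transfer Theorem (Remark \ref{Rmk_HTT}) applied to contractions that respect the wedge decomposition. The starting observation is that, because $K_i = K_0 \vee \bigvee_{j=1}^{i} \mathbb{S}^{n_j}$, the reduced rational singular chain complex splits canonically as
$$\widetilde C_*(K_i; \bQ) \cong \widetilde C_*(K_0; \bQ) \oplus \bigoplus_{j=1}^{i} \widetilde C_*(\mathbb{S}^{n_j}; \bQ),$$
and a reduced Alexander--Whitney diagonal $\widetilde{\Delta}$ on $\widetilde C_*(K_i; \bQ)$ preserves this decomposition, in the sense that it sends each summand into the tensor square of itself inside $\widetilde C_*(K_i; \bQ)^{\otimes 2}$.

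First I fix, once and for all, contraction data $(\pi_0, \iota_0, \phi_0)$ between $\widetilde C_*(K_0; \bQ)$ and $\widetilde H_*(K_0; \bQ)$, and for each $j \ge 1$ contraction data $(\pi_j^{\mathrm{sph}}, \iota_j^{\mathrm{sph}}, \phi_j^{\mathrm{sph}})$ between $\widetilde C_*(\mathbb{S}^{n_j}; \bQ)$ and $\widetilde H_*(\mathbb{S}^{n_j}; \bQ)$; the latter can be taken with $\phi_j^{\mathrm{sph}} = 0$. Taking direct sums of these data produces a contraction $(\pi^i, \iota^i, \phi^i)$ between $\widetilde C_*(K_i; \bQ)$ and $\widetilde H_*(K_i; \bQ)$ satisfying the side conditions of (\ref{ecuacion1}), since each such identity is inherited summand by summand. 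Applying Remark \ref{Rmk_HTT} then yields a transferred $A_\infty$-coalgebra $\{\Delta_n^i\}_n$ on $\widetilde H_*(K_i; \bQ)$ for every $0 \le i \le N$.

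The heart of the argument is the claim that, under this choice, the wedge inclusion $K_{i-1} \hookrightarrow K_i$ induces a \emph{strict} morphism of $A_\infty$-coalgebras, \emph{i.e.}, $\Delta_n^i \circ f = f^{\otimes n} \circ \Delta_n^{i-1}$, where $f = f_p^{i-1,i}$ is identified with the inclusion of the direct summand $\widetilde H_*(K_{i-1}; \bQ) \hookrightarrow \widetilde H_*(K_i; \bQ)$. This follows from unpacking the tree formula for $\Delta_n^i$ hinted at in Remark \ref{Rmk_HTT}: starting from $\alpha$ in the $K_{i-1}$ summand, $\iota^i(\alpha)$ already lies in $\widetilde C_*(K_{i-1}; \bQ)$, each subsequent application of $\widetilde{\Delta}$ and $\phi^i$ preserves this subcomplex, and the final $\pi^i$ projects back into $\widetilde H_*(K_{i-1}; \bQ)$. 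Hence the tree sum computing $\Delta_n^i(f(\alpha))$ coincides verbatim with the one computing $\Delta_n^{i-1}(\alpha)$, viewed inside the $(i-1)$-summand. Given the intertwining, the kernel inclusion $f(\Ker \Delta^{i-1}) \subseteq \Ker \Delta^i$ is immediate: if $\Delta_n^{i-1}(\alpha) = 0$, then $\Delta_n^i(f(\alpha)) = f^{\otimes n}(0) = 0$.

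The main obstacle is purely combinatorial: recording carefully that \emph{every} tree in the HTT expansion respects the summand decomposition, which is transparent from the summandwise character of $\iota^i$, $\phi^i$, $\widetilde{\Delta}$, and $\pi^i$ but deserves an explicit check. If one prefers to bypass this bookkeeping, an alternative route proceeds via rational homotopy theory: the minimal Quillen model of $K_i$ is obtained from that of $K_0$ by freely adjoining generators in degrees $n_j - 1$ with vanishing differential, so the wedge inclusion is realised by a strict inclusion of minimal DGL models, and translating this strictness through the standard duality between minimal free DGLs and transferred $A_\infty$-coalgebras on homology yields the same conclusion.
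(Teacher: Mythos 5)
Your argument is correct in substance but takes a genuinely different route from the paper's. The paper stays entirely inside rational homotopy theory: it fixes a Quillen minimal model $(\lib(V),\partial)$ of $K_0$, applies the cell-attachment theorem (Thm.~\ref{THM:Quillen-cells}) with trivial attaching maps to obtain nested minimal models $(\lib(V_i),\partial^i)$ satisfying $\partial^{i+1}v=\partial^i v$ for $v\in V_i\subset V_{i+1}$, and then reads off the transferred $A_\infty$-coalgebras through the universal enveloping algebra/cobar dictionary; the compatibility of the differentials gives the strict intertwining $\De^{i+1}f_p^{i,i+1}=(f^{i,i+1})^{\otimes n}\De^i$ and hence the kernel inclusion. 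This is precisely the ``alternative route'' you sketch in your closing paragraph. Your primary route---building the contraction of $\widetilde C_*(K_i;\bQ)$ onto $\widetilde H_*(K_i;\bQ)$ as a direct sum of a fixed contraction for $K_0$ and fixed (zero-homotopy) contractions for the spheres, and observing that every tree in the transfer formula of Remark~\ref{Rmk_HTT} preserves the $K_{i-1}$-summand---is sound and yields the same strictness. It also buys more: it uses neither the simple connectivity of $K_0$ nor rational coefficients, so it proves the statement over any field, whereas the Quillen-model argument is intrinsically rational and $1$-connected. The one point you should tighten is the opening claim that the reduced \emph{singular} chain complex of a wedge splits canonically as a direct sum: that is literally false for singular chains, since a singular simplex need not land in a single wedge summand. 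You need to replace $\widetilde C_*(K_i;\bQ)$ by the quasi-isomorphic sub-DGC generated by simplices supported in a single summand (or by cellular/simplicial chains of a suitable CW or simplicial model), and note that transferring from this smaller DGC still produces a transferred $A_\infty$-coalgebra in the sense of Def.~\ref{DEF:good-A-coalg}. Since the paper takes the same liberty elsewhere (e.g., in the proof of Prop.~\ref{Prop:Borromean-m3}), this is bookkeeping rather than a gap in the idea, but it deserves an explicit sentence.
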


We save the proof of Thm. \ref{THM:Coherent-choices-Quillen}
for \S \ref{sec_appendix} and
proceed to prove the following particular
case of the fundamental decomposition theorem of $A_\infty$-persistence
\cite[Thm. 2.7]{Belchi-Murillo15}, to show how much the assumption
(\ref{eq_assumptionfKerInKer})
simplifies things; compare this to the proof 
in \cite[Thm. 2.7]{Belchi-Murillo15}.

\begin{theorem}
\label{Thm_Fund_Thm_A_infty-p_Functorial}
Fix some integers $p\geq0$ and $n\geq 1$. 
If $\Delta^i$ denotes the restriction of ${\Delta_n^i}$ to ${H_p(K_i)}$
and $f_p^{i,i+1} \left( \Ker \De^i \right)
\subseteq \Ker \De^{i+1}$ holds for all
$0 \leq i < N$, then there exists a unique
multiset $M_{p, n}$ of
intervals of the form $[i,j)$ for $0 \leq i < j \leq N$,
and intervals of the form $[i,\infty)$ for $0 \leq i \leq N$,
such that	for all \,$0 \leq i \leq j \leq N$,
the dimension
$\dim (\De_n)_p^{i,j}\left( \mathcal K \right)$
equals the number of intervals
in $M_{p, n}$ 
(counted with multiplicities)
which contain the interval $[i,j]$.

In particular, 
$\dim \Ker {\De^i_n}_{|H_p(K_i)}$
equals the number of intervals
in $M_{p, n}$ 
(counted with multiplicities)
which contain the integer $i$.
\end{theorem}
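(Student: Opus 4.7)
The plan is to use the compatibility assumption $f_p^{i,i+1}(\Ker \Delta^i) \subseteq \Ker \Delta^{i+1}$ to reduce the $A_\infty$-persistence setting to an ordinary persistence module, after which Lemma \ref{lemma_persModsSplit} applies essentially directly. This is precisely the benefit highlighted just before the statement: bypassing the zigzag machinery used in \cite[Thm. 2.7]{Belchi-Murillo15}.

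First, I would show that under the hypothesis, the intersection of kernels appearing in Definition \ref{Delta_nPersistentGroups} collapses to a single kernel:
$$ \bigcap_{k=i}^{j} \Ker\bigl( \Delta^k \circ f_p^{i,k} \bigr) \;=\; \Ker \Delta^i. $$
The inclusion $\supseteq$ is obtained by a short induction on $k-i \geq 0$: the case $k=i$ is trivial, and if $v \in \Ker \Delta^i$ and $f_p^{i,k}(v) \in \Ker \Delta^k$, then by the compatibility assumption $f_p^{i,k+1}(v) = f_p^{k,k+1}(f_p^{i,k}(v)) \in f_p^{k,k+1}(\Ker\Delta^k) \subseteq \Ker \Delta^{k+1}$. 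The reverse inclusion is immediate from the $k=i$ term, since $\Delta^i \circ f_p^{i,i} = \Delta^i$. Consequently, $(\Delta_n)_p^{i,j}(\calK) = \Image f_p^{i,j}\bigr|_{\Ker \Delta^i}$.

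Second, I would set $V_i \coloneqq \Ker \Delta^i \subseteq H_p(K_i)$ and note that the hypothesis precisely asserts that $f_p^{i,i+1}$ restricts to a linear map $g^{i,i+1}\colon V_i \to V_{i+1}$. Since each $V_i$ is finite dimensional (as $H_p(K_i)$ is), this yields a genuine persistence module
$$ \xymatrix{ V_0 \ar[r]^-{g^{0,1}} & V_1 \ar[r]^-{g^{1,2}} & \ldots \ar[r]^-{g^{N-1,N}} & V_N } $$
in the sense of Definition \ref{Def:pers.mod}, whose composed maps $g^{i,j}$ satisfy $\Image g^{i,j} = \Image f_p^{i,j}\bigr|_{V_i}$ by construction. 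By the previous step, $\dim \Image g^{i,j} = \dim (\Delta_n)_p^{i,j}(\calK)$.

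Third, I would invoke Lemma \ref{lemma_persModsSplit} on this persistence module to obtain the unique multiset $M_{p,n}$ of intervals such that $\dim \Image g^{i,j}$ equals the count of intervals in $M_{p,n}$ containing $[i,j]$. This delivers exactly the claimed decomposition of the $\Delta_n$-persistent groups. The ``in particular'' clause is the special case $j=i$, where $(\Delta_n)_p^{i,i}(\calK) = \Ker \Delta^i = \Ker {\Delta^i_n}\bigr|_{H_p(K_i)}$, so the multiset count through the point $i$ gives the stated dimension.

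The only nontrivial step is the first one — once the intersection of kernels collapses, everything becomes a formal consequence of Lemma \ref{lemma_persModsSplit}. The contrast with \cite[Thm. 2.7]{Belchi-Murillo15} is precisely that, without assumption (\ref{eq_assumptionfKerInKer}), this collapse fails, the $V_i$'s do not assemble into a persistence module, and one is forced into zigzag decompositions over a more intricate diagram. The compatibility hypothesis is exactly what is needed to avoid those complications.
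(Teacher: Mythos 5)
Your proposal is correct and follows essentially the same route as the paper's own proof: restrict the persistence module $H_p(K_0)\to\cdots\to H_p(K_N)$ to the submodule $\Ker\De^0\to\cdots\to\Ker\De^N$ and apply Lemma \ref{lemma_persModsSplit}. In fact you are slightly more careful than the paper, which asserts $(\De_n)_p^{i,j}(\calK)=\Image g^{i,j}_p$ without comment, whereas you explicitly verify the collapse $\bigcap_{k=i}^{j}\Ker\bigl(\De^k\circ f_p^{i,k}\bigr)=\Ker\De^i$ by induction.
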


\begin{proof}
Fix a transferred $A_\infty$-coalgebra structure
$\{\De_n^i\}_n$ on $H_*(K_i)$,
for all $0 \leq i \leq N,$
and fix two integers $n\geq 1$
and $p \geq 0$.
If the inclusion
$f^{i,i+1}_p \left( \Ker \De^i \right)
\subseteq \Ker \De^{i+1}$
holds for all
$0 \leq i < N$, then 
the maps $f^{i,j}_p$
in the persistence module
$$ 
\xymatrix{
H_p(K_0)
\ar[r]^-{f^{0,1}_p} & H_p(K_1)
\ar[r]^-{f^{1,2}_p} & \ldots
\ar[r]^-{f^{N-1,N}_p} & H_p(K_N)
}$$
restrict to maps $g^{i,j}_p$ that form the persistence (sub)module
\begin{equation}
\label{persistence_submodule}
\xymatrix{
\Ker \De^0
\ar[r]^-{g^{0,1}_p} & \Ker \De^1
\ar[r]^-{g^{1,2}_p} & \ldots
\ar[r]^-{g^{N-1,N}_p} & \Ker \De^1.
}
\end{equation}

\noindent Since $(\De_n)_p^{i,j}\left( \mathcal K \right) = \Image g^{i,j}_p$,
applying Lemma \ref{lemma_persModsSplit}
to (\ref{persistence_submodule}) finishes the proof.
\end{proof}

\begin{remark}
In the terminology introduced in 
\cite[Def. 2.5]{Belchi-Murillo15}, 
Ex. \ref{Ex_2alignedBars} shows that a class that $\De_n$-falls asleep and
$\De_n$-wakes up again would be represented in the $\De_n$-barcode as apparently independent intervals, each corresponding to a different period in which the class has been $\De_n$-awake, and assumption (\ref{eq_assumptionfKerInKer})
would amount to making sure no class can $\De_n$-wake up once it has 
$\De_n$-fallen asleep.
\end{remark}
%%%%%%%%%%%%%%%%%%%%%%%%%%%%%%%%%%%%%
%%%%%%%%%%%%%%%%%%%%%%%%%%%%%%%%%%%%%
%%%%%%%%%%%%%%%%%%%%%%%%%%%%%%%%%%%%%
\section{Appendix}
\label{sec_appendix}

We relegated to this section the proof of 
Ex. \ref{prop_De_n-not-invariant} 
and Thm. \ref{THM:min-invariant}
and \ref{THM:Coherent-choices-Quillen}.
We will start by recalling some results
involving rational homotopy theory,
whose standard reference is
\cite{Felix-Halperin-Thomas_RHT}, mostly to explain the relation between
Quillen minimal models and transferred $A_{\infty}$-coalgebras.

\begin{remark}\label{primerremark} Let us start by noting that $A_\infty$-coalgebra structures on a graded vector space $C$ are in one-to-one correspondence with differentials in the complete tensor algebra $\widehat T(s^{-1}C)=\Pi_{n\ge 1}T^n(s^{-1}C)$ with $T^n(s^{-1}C)=\left(s^{-1}C\right)^{\otimes n}$. 
Here, $s^{-1}C$ denotes the desuspension of $C$, \emph{i.e.,} $(s^{-1}C)_p=C_{p+1}$,
the grading in 
$\widehat T(s^{-1}C)$
is given by
$$\left( \widehat T(s^{-1}C) \right)_p=
\Pi_{n\ge 1}\left( T^n(s^{-1}C) \right)_p,$$
where
$$\left( T^n(s^{-1}C) \right)_p=
\bigoplus_{p_1+\ldots+p_n=p}
\left(s^{-1}C\right)_{p_1} \otimes
\ldots \otimes
\left(s^{-1}C\right)_{p_n}$$
and the product is given by concatenation.

Giving a differential 
$d\colon \widehat T(s^{-1}C) \longrightarrow \widehat T(s^{-1}C)$ 
in this algebra
is equivalent to giving its restriction
$d\colon  s^{-1}C \longrightarrow \widehat T(s^{-1}C)$ 
and extending $d$ as a derivation.
Such
$d\colon  s^{-1}C \longrightarrow \widehat T(s^{-1}C)$ 
can be written as a sum $d=\sum_{n\ge 1}d_n$, with $d_n(s^{-1}C)\subset T^{ n}(s^{-1}C)$, for $n\ge 1$. 
With this notation, 
the operators $\{\Delta_n\}_{n\ge 1}$ and $\{d_n\}_{n\ge 1}$ uniquely determine each other via
\begin{equation}\label{EQ:d_n-VS-Delta_n}
\begin{aligned}
\Delta_n&=-s^{\otimes n}\circ d_n\circ s^{-1}\colon C\to
C^{\otimes n}, \\
d_{n}&=
-(-1)^{\frac{n(n-1)}{2}}(s^{-1})^{\otimes n}\circ\Delta_n\circ s\colon s^{-1}C\to T^{ n}(s^{-1}C).
\end{aligned}
\end{equation}

\end{remark}

\begin{definition}\label{DEF:cobar}
Given an $A_\infty$-coalgebra
$\left( C, \{\De_n\}_n \right)$,
the complete tensor algebra $\widehat T(s^{-1}C)$
with the differential given by 
(\ref{EQ:d_n-VS-Delta_n}) 
forms a DGA
that is called
the \textbf{cobar construction} of 
$\left( C, \{\De_n\}_n \right)$
and is usually denoted by $\Omega C$.
\end{definition}

This construction behaves nicely with respect to morphisms:
giving a morphism of $A_\infty$-coalgebras
$ f\colon (C,\{\De_n\}_{n\ge 1})\to (C',\{\De'_n\}_{n\ge 1})$
is equivalent to giving a morphism of DGAs, 
which we denote in the same way,
between the corresponding cobar constructions
$${f\colon (\widehat T(s^{-1}C),d)\longrightarrow (\widehat T(s^{-1}C'),d').}$$
Indeed, write $f_{|_{s^{-1}C}}=\sum_{k\ge 0} f_k$ with $f_k\colon s^{-1}C\to T^{k}(s^{-1}C')$ and consider the morphisms
$f_{(k)}\colon C\to {C'}^{\otimes k}$ induced by $f_k$  eliminating desuspensions. Then, it is straightforward to check that the equality $fd=d'f$ translates to the identities MI$(i)$ in Definition \ref{DEF:Morph-A-coalg} 
satisfied by the maps $\{f_{(k)}\}_{k\ge 0}$.

\begin{definition}
A \textbf{differential graded Lie algebra} over $\mathbb{Q}$ (DGL henceforth) is a differential graded $\mathbb{Q}$-vector space $(L,\partial)$ in which:

\begin{itemize}

\item[$\bullet$] $L=\oplus_{p\in\bZ}L_p$ in endowed with a linear operation, called Lie bracket,
$$
[\,\,,\,]\colon L_p\otimes L_q\longrightarrow L_{p+q},\qquad p,q\in \bZ,
$$
 satisfying {\em antisymmetry},
$$
[x,y]=(-1)^{|x||y|+1}[y,x],
$$
and the {\em Jacobi identity}
$$
\bigl[x,[y,z]\bigr]=\bigl[[x,y],z\bigr]+(-1)^{|x||y|}\bigl[y,[x,z]\bigr],
$$
for any  homogeneous elements $x,y,z\in L$. In other words, $L$ is a graded Lie algebra.

\item[$\bullet$] The differential $\partial$, of degree $-1$, satisfies the {\em Leibniz rule},
$$
\partial[x,y]=[\partial x,y]+(-1)^{|x|}[x,\partial y],
$$
for any  pair of homogeneous elements $x,y\in L$.
\end{itemize}
\end{definition}

The tensor algebra
$T(V)=\oplus_{n\ge 0} T^n(V)$ generated by the graded vector space $V$ is endowed with a graded Lie algebra structure with the brackets given by commutators:
$$
[a,b]=a\otimes b -(-1)^{|a||b|}b\otimes a,
$$
for any homogeneous elements $a,b\in T(V)$.  Then, the \textbf{free Lie algebra} $\lib(V)$ is the Lie subalgebra of $T(V)$ generated by $V$.

Observe that $\lib (V)$ is filtered as follows,
$$
\lib(V)= \oplus_{n\ge 1}\lib^n(V),
$$
in which $\lib^n(V)$ is the vector space spanned by  Lie brackets of length $n$, that is $\lib^n(V)=\lib(V)\cap T^n(V)$. In particular, to set a differential in $\lib(V)$,
it is enough to define linear maps
$\partial_n\colon V\longrightarrow \lib^n(V)$ for each $ n\ge 1$ in such a way that $\partial=\sum_{n\ge 1}\partial_n$ squares to zero and satisfies the Leibniz rule. Note that $\partial_1$, the  {\em
linear part of} $\partial$, is therefore a differential in $V$.

\begin{remark}\label{RMK:DGL,LV_induces_DGA,TV}
Given a DGL of the form $(\lib(V),\partial)$, the differential $\partial$ can be extended as a derivation of graded algebras to $T(V)$ to turn it into a DGA. In other words, consider the functor
$$
U\colon \catdgl\longrightarrow \catdga
$$
 which associates to every DGL $(L,\partial)$ its \textbf{universal enveloping algebra} $UL$ which is the graded algebra
 $$
 T(L)/\langle [x,y]-(x\otimes y -(-1)^{|x||y|}y\otimes x)\rangle,\quad x,y\in L,
 $$
 with the differential induced by $\partial$. Then, whenever $(L,\partial)=(\lib(V),\partial)$, one has
 $$
 U (\lib(V),\partial)=(T(V),\partial).
 $$
 \end{remark}

In \cite{Quillen69}, D. Quillen associates to every $1$-connected topological space $X$ of the homotopy type of a CW complex, a particular DGL  $(\lib(V),\partial)$ which is \textbf{reduced}, \emph{i.e.,} $V=\oplus_{p\ge 1} V_p$, for which:
\begin{equation}
\label{eq_V_and_ReducedHomology}
H_*(V,\partial_1)\cong s^{-1}\widetilde H_*(X;\bQ)=\widetilde H_{*+1}(X;\bQ).
\end{equation}
\begin{equation}
\label{eq_DGL_and_HomotopyGroups}
H_*(\lib(V),\partial)\cong \pi_*(\Omega X)\otimes \bQ\cong \pi_{*+1}(X)\otimes\bQ.
\end{equation}

This DGL is a \textbf{Quillen model} of $X$  and it is called \textbf{minimal} whenever $\partial_1=0$. In this case, (\ref{eq_V_and_ReducedHomology}) becomes
\begin{equation}
\label{eq_V_and_ReducedHomology_minimal}
V\cong s^{-1}\widetilde H_*(X;\bQ).
\end{equation}

The Quillen minimal model of $X$ is unique up to isomorphism. 
Remarks 
\ref{RMK:DGL,LV_induces_DGA,TV}
and \ref{primerremark}
together with isomorphism (\ref{eq_V_and_ReducedHomology_minimal}) 
assert that 
any Quillen minimal model 
$(\lib(V),\partial)$ of $X$
induces a structure of transferred $A_{\infty}$-coalgebra on
$\widetilde H_*(X;\bQ)$.
%This should give us an idea of the great
%amount of topological information contained in a 
%transferred $A_{\infty}$-coalgebra,
%since two simply-connected CW complexes have isomorphic Quillen minimal models if %and only if they have the same rational homotopy type.
Given a $1$-connected CW complex $Y$, a (not necessarily minimal) Quillen model of $Y$ can be described in terms of a CW decomposition of $Y$ as follows:

\begin{theorem}\label{THM:Quillen-cells}\cite[III.3.(6)]{Tanre83} Let $Y=X\cup_fe^{n+1}$ be a
1-connected space obtained attaching an $(n+1)$-cell to $X$ via a map $f\colon \bS^n\to X$. Let $(\lib(V),\partial)$ be a Quillen model of $X$ and let $\Phi\in \lib(V)_{n-1}$ be a cycle representing the homology class in $H_{n-1}(\lib(V),\partial)$ which is identified, via isomorphism (\ref{eq_DGL_and_HomotopyGroups}) with the homotopy class $[f]\in\pi_n(X)\otimes\bQ$. Then, the free DGL
$
{(\lib(V\oplus \bQ a),\partial')}
$
determined by
$$
\begin{cases}
\partial' v = \partial v, \qquad \text{for all }
v \in V \\
\partial' a=\Phi
\end{cases}
$$
forms a Quillen model of $Y$.
\end{theorem}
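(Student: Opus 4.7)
The plan is to present $Y$ as the topological pushout of the span $X\xleftarrow{f}\bS^n\hookrightarrow D^{n+1}$ and to transport this across Quillen's functor $\lambda$ to a homotopy pushout of reduced DGLs over $\bQ$. Since $\lambda$ is a left Quillen functor, it sends topological pushouts along cofibrations to homotopy pushouts in $\catdgl$, so to exhibit a Quillen model of $Y$ it will suffice to compute the strict pushout in $\catdgl$ of cofibrant models realizing this span.

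First I would fix standard cofibrant Quillen models: $(\lib(u),0)$ for $\bS^n$ with $|u|=n-1$, and the acyclic $(\lib(u,a),\partial)$ for $D^{n+1}$ with $|a|=n$ and $\partial a=u$. The inclusion $\bS^n\hookrightarrow D^{n+1}$ is then modelled by the obvious inclusion $(\lib(u),0)\hookrightarrow(\lib(u,a),\partial)$, which is a DGL-cofibration as it freely adjoins a single generator. On the other leg of the span, $(\lib(V),\partial)$ is already a cofibrant Quillen model of $X$ by hypothesis. Next I would model the attaching map $f$ by the DGL morphism $\varphi\colon(\lib(u),0)\to(\lib(V),\partial)$ sending $u\mapsto\Phi$; this is well-defined because $\Phi$ is a cycle of degree $n-1$, and by isomorphism (\ref{eq_DGL_and_HomotopyGroups}) the hypothesis that $[\Phi]$ corresponds to $[f]\in\pi_n(X)\otimes\bQ$ guarantees that $\varphi$ realizes $\lambda(f)$ up to DGL-homotopy.

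I would then compute the pushout of $(\lib(V),\partial)\xleftarrow{\varphi}(\lib(u),0)\hookrightarrow(\lib(u,a),\partial)$ directly. Using the identity $\lib(V_1)\amalg\lib(V_2)=\lib(V_1\oplus V_2)$ for free products of free Lie algebras, the right leg of the span freely adjoins the generator $a$ with $\partial a = u$, so pushing out along $\varphi$ freely adjoins $a$ to $\lib(V)$ with $\partial'a=\varphi(u)=\Phi$, while keeping $\partial'v=\partial v$ for all $v\in V$. This produces exactly the DGL $(\lib(V\oplus\bQ a),\partial')$ in the statement. Because both maps in the span are DGL-cofibrations, the strict pushout coincides with the homotopy pushout; the Quillen-equivalence argument then identifies the result with a Quillen model of $Y$.

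The main obstacle will be making rigorous the claim that $\lambda$ converts this particular topological pushout into a homotopy pushout of DGLs equivalent to $\lambda(Y)$, and showing that any two DGL-realizations of the rational homotopy class $[f]$ yield quasi-isomorphic pushouts. Both follow from the general model-categorical framework of \cite{Quillen69}, but a more concrete closure would be to compute the homology of $(\lib(V\oplus\bQ a),\partial')$ by iterating a long-exact-sequence argument based on the inclusion $(\lib(V),\partial)\hookrightarrow(\lib(V\oplus\bQ a),\partial')$ and to compare the result, via the cofibration sequence $\bS^n\to X\to Y$ and the rational Hurewicz theorem, with $\pi_{*+1}(Y)\otimes\bQ$.
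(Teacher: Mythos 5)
The paper does not actually prove this statement; it is quoted from Tanr\'e \cite[III.3.(6)]{Tanre83} as a known result, so there is no internal proof to compare against. That said, your plan is the standard argument for the cell-attachment theorem (essentially the one in Tanr\'e and in F\'elix--Halperin--Thomas, \S 24(e)), and the core computation is right: the span $(\lib(V),\partial)\leftarrow(\lib(u),0)\hookrightarrow(\lib(u,a),\partial a=u)$ with $|u|=n-1$, $|a|=n$ correctly models $X\leftarrow \bS^n\hookrightarrow D^{n+1}$, the right leg is a cofibration of cofibrant objects so the strict pushout is the homotopy pushout, and the amalgamated free product identification $\lib(V)\amalg_{\lib(u)}\lib(u,a)\cong\lib(V\oplus\bQ a)$ with $\partial'a=\varphi(u)=\Phi$ is exactly the DGL in the statement. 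The two points you flag as remaining obstacles are indeed where the real content lies, and they are settled by Quillen's framework rather than by anything you would need to invent: (i) Quillen's $\lambda$ is not literally a single left Quillen functor from spaces to $\catdgl$ but a zigzag of Quillen equivalences, so ``preserves homotopy pushouts'' has to be run through the derived functors of that zigzag (this is precisely what Tanr\'e's Chapter III sets up); and (ii) independence of the chosen cycle $\Phi$ within its homology class is the standard fact that homotopic legs of a span give weakly equivalent homotopy pushouts, using that homotopy classes of DGL maps out of $(\lib(u),0)$ are classified by $H_{n-1}(\lib(V),\partial)\cong\pi_n(X)\otimes\bQ$. One small edge case worth noting: for $n=1$ your model $(\lib(u),0)$ of $\bS^1$ has $|u|=0$ and is not a reduced DGL; since $X$ is $1$-connected this case is degenerate ($[f]=0$ rationally and $\Phi=0$), but it should be treated separately rather than folded into the pushout argument.
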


We now proceed to prove the claim in
Ex. \ref{prop_De_n-not-invariant}, 
which points out that,
in general, the numbers 
$\dim \Ker {\Delta_n}_{|H_p(X)}$
depend on the choice of transferred $A_\infty$-structure.

\begin{proof}
The space $
X=\mathbb{C} P^2 \vee \mathbb{S}^7,
$ can be CW-decomposed as
% $$ e^0 \cup_0 e^2 \cup_\eta e^4 \cup_0 e^7,$$
$$ e^0 \cup e^2 \cup e^4 \cup e^7,$$
where the attaching map of the 4-cell is the Hopf map $\eta:\mathbb{S}^3 \longrightarrow \mathbb{S}^2$
%\cite[Rmk. 3.4.3 (2)]{Arkowitz11}
and the rest of attaching maps are all trivial.
In turn, $\eta = \frac{1}{2} g \in \pi_3(\mathbb{S}^2) \otimes \mathbb{Q}$, where $g$ denotes the Whitehead product 
$ g=[id_{\bS^2},id_{\bS^2}]. $
% \cite[Rmk. 5.7]{Berglund12}
Hence, via Thm. \ref{THM:Quillen-cells},
the free DGL $ (\lib(x_1,y_3,z_6),\partial) $
forms a Quillen minimal model of $X$, where subscripts denote degree and the differential is given by $\partial x_1 = 0, \partial y_3 = \frac{1}{2} [x_1, x_1]$ and $ \partial z_6 = 0 $.

Consider the isomorphism of (non differential) free Lie algebras
$$
\psi\colon \lib(x_1,y_3,z_6)
\stackrel{\cong}{\longrightarrow} \lib(x_1,u_3,v_6)
$$
given by $\psi(x_1) = x_1, \psi(y_3) = \frac{1}{2} u_3$ and $\psi(z_6) = v_6 - [u_3, u_3].$
Then, set in the free Lie algebra on the right 
the differential
$ \partial=\psi \partial \psi^{-1} $
so that $\psi$ becomes an isomorphism of DGLs between
$ (\lib(V),\partial) = (\lib(x_1,y_3,z_6),\partial) $ and
$ (\lib(W),\partial) = (\lib(x_1, u_3, v_6),\partial).$
A short computation shows that in $(\lib(W),\partial)$, 
$\partial x_1 = 0,
\partial u_3 = [x_1, x_1] $ and
$ \partial v_6 = 2 [[x_1, x_1], u_3]. $
Since $(\lib(V),\partial)$ and $(\lib(W),\partial)$
are isomorphic DGLs,
$(\lib(W),\partial)$
is also a Quillen minimal model of $X$,
and in particular
$
V\cong W \cong s^{-1}\widetilde H_*(X;\bQ).
$
A short computation shows that, in the universal enveloping algebra $(T(W),d)=U(\lib(W),\partial)$,
$
d_3 v_6=4 x_1 \otimes x_1 \otimes u_3 -
4 u_3 \otimes x_1 \otimes x_1.
$
In other words, in the $A_\infty$-coalgebra structure $\{\Delta_n\}_n$ on 
$\widetilde H_*(X;\bQ)$
given by $(\lib(W),\partial)$,
$
\Delta_3 \neq 0.
$

On the other hand, 
in the universal enveloping algebra 
$(T(V),d)=U(\lib(V),\partial)$,
$
d_n = 0
$
for all $n \neq 2$,
which means that
in the $A_\infty$-coalgebra structure $\{\Delta'_n\}_n$ on $\widetilde H_*(X;\bQ)$
given by $(\lib(V),\partial)$,
$
\Delta'_n = 0
$
for all $n \neq 2$;
in particular, $\De_3'=0$.
\end{proof}
	
To prove Thm. \ref{THM:min-invariant}
we will first prove a preliminary result.

\begin{lemma}\label{Lemma:00k00}
Let $\left( C, \{\De_n\}_n \right)$
be an $A_\infty$-coalgebra such that
$\De_n \neq 0$ for some $n\geq 1$.
If we denote by $k$ the lowest integer $n\geq 1$ such
that $\De_n \neq 0$, then
$ \left( C, \{0, \ldots, 0, \De_k, 0, \ldots \} \right) $
forms an $A_\infty$-coalgebra too.
\end{lemma}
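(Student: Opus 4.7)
The plan is to verify the Stasheff identities $\text{SI}(n)$ for the truncated family $\{\widetilde\Delta_n\}_n$ defined by $\widetilde\Delta_k = \Delta_k$ and $\widetilde\Delta_n = 0$ for $n \neq k$. The key observation is that in $\text{SI}(n)$ each summand is a composition $(1^{\otimes n-i-j} \otimes \widetilde\Delta_i \otimes 1^j)\widetilde\Delta_{n-i+1}$, so a term survives only when both $\widetilde\Delta_i$ and $\widetilde\Delta_{n-i+1}$ are nonzero, which forces $i = k$ and $n-i+1 = k$, i.e., $n = 2k-1$.

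Consequently, $\text{SI}(n)$ for $\{\widetilde\Delta_n\}_n$ holds trivially for every $n \neq 2k-1$, and the only nontrivial identity to verify is
\begin{equation*}
\sum_{j=0}^{k-1} (-1)^{k+j+kj}\bigl(1^{\otimes k-1-j}\otimes \Delta_k \otimes 1^{\otimes j}\bigr)\Delta_k = 0.
\end{equation*}
The next step is to derive this from the original $A_\infty$-coalgebra axiom. Writing out $\text{SI}(2k-1)$ for $\{\Delta_n\}_n$, a term $(1^{\otimes 2k-1-i-j}\otimes \Delta_i \otimes 1^{\otimes j})\Delta_{2k-i}$ can be nonzero only if $\Delta_i \neq 0$ and $\Delta_{2k-i}\neq 0$; by the minimality of $k$, this demands $i \geq k$ and $2k-i \geq k$, hence $i=k$. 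All remaining summands vanish identically, and $\text{SI}(2k-1)$ collapses to exactly the displayed identity above. Hence the required Stasheff identity for $\{\widetilde\Delta_n\}_n$ is inherited for free from the original structure.

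There is no real obstacle here beyond a careful bookkeeping of which summands in $\text{SI}(n)$ survive; the whole argument is a support-of-the-operations calculation combined with specialising the original Stasheff identities to $n = 2k-1$. I would present it in two short steps: first, observe that $\text{SI}(n)$ for $\{\widetilde\Delta_n\}_n$ is automatic unless $n = 2k-1$; second, check that $\text{SI}(2k-1)$ for $\{\Delta_n\}_n$ reduces (by minimality of $k$) to precisely $\text{SI}(2k-1)$ for $\{\widetilde\Delta_n\}_n$. This confirms that $(C,\{0,\ldots,0,\Delta_k,0,\ldots\})$ is a bona fide $A_\infty$-coalgebra, completing the proof.
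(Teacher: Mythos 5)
Your proof is correct and follows essentially the same route as the paper: both arguments observe that for the truncated family only $\text{SI}(2k-1)$ is nontrivial, and that minimality of $k$ forces $\text{SI}(2k-1)$ for the original family to collapse to exactly that identity. (Your index range $\sum_{j=0}^{k-1}$ is in fact the correct one; the paper's displayed upper limit $k+1$ appears to be a typo.)
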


\begin{proof}
Let $\left( C, \{\De_n\}_n \right)$ and $k$
be as in the statement and 
define
$\De_n'\coloneqq
\begin{cases}
0, & n\neq k\\
\De_k, & n = k.
\end{cases}
$

Let us denote by $\{\text{SI}(n)\}_{n}$
the Stasheff identities (Definition 
\ref{DEF:A-infty-coalgebra}) on
$\{\De_n\}_n$ and by 
$\{\text{SI}'(n)\}_{n}$
the Stasheff identities on
$\{\De'_n\}_n$.
The pair
$\left( C, \{\De'_n\}_n \right)$ forms an $A_\infty$-coalgebra
if and only if the identities $\text{SI}'(n)$ hold for all $n\geq 1$.
Notice that all of them except for 
$\text{SI}'(2k-1)$
become the trivial identity 
$0 = 0,$
and that $\text{SI}'(2k-1)$
becomes 
$$ \sum_{j=0}^{k+1} (-1)^{k+j+kj}
\left( 1^{\otimes k-1-j} \otimes \De_k \otimes
1^j \right) \De_k = 0,$$
since this is the only 
$\text{SI}'(n)$
in which $\De_k$
composes with 
$\De_k$ (tensored by identities),
which is the only non-zero operation in $\{\De_n'\}$.

Let us now look at the identity 
$\text{SI}(2k-1)$ on $\{\De_n\}_n$.
If some $\De_n$ with $n>k$ appears in 
$\text{SI}(2k-1)$,
then $\De_n$ (or $\De_n$
tensored by identities)
is pre or post composed with
$\De_m$ (or with $\De_m$
tensored by identities) for some $m<k$.
Since $\De_m = 0$, all such compositions vanish.
Therefore, 
$\text{SI}(2k-1)$
coincides with 
$\text{SI}'(2k-1)$.
Since we know that $\left( C, \{\De_n\}_n \right)$
forms an $A_\infty$-coalgebra
and hence that
$\text{SI}(2k-1)$ holds,
it follows that 
$\text{SI}'(2k-1)$
holds too and we have therefore checked that
$\left( C, \{\De'_n\}_n \right)$
satisfies all Stasheff identities.
\end{proof}

Next we prove Thm. \ref{THM:min-invariant}.
In this proof we will make use of the algebraic Milnor-Moore
spectral sequence.
The background on spectral sequences
needed to follow this proof can be found in 
\cite[\S 18 \& \S 23(b)]{Felix-Halperin-Thomas_RHT}
and
\cite[Ch. 1]{Greub-Halperin-Vanstone76}.

\begin{proof}
Let $C$ be a graded vector space with $C_p=0$ for all $p<0$.
Let 
$(C,\{\De_n\}_n)$
be a minimal 
$A_\infty$-coalgebra
and let
$ \Omega C = (\widehat T(s^{-1}C),d) $
be its cobar construction
(Definition \ref{DEF:cobar}). We can then write 
$d$ as the sum
$\sum_{n\geq 1} d_n,$
where each
$$ d_n\colon
\widehat T(s^{-1}C)
\longrightarrow T^{\geq n} (s^{-1}C) = 
\Pi_{p\ge n} T^p(s^{-1}C) $$
satisfies
$$ d_n \left( T^p(s^{-1}C) \right) \subseteq
T^{p+n-1}(s^{-1}C) $$
for all $p\in\N$
and acts on $T^1(s^{-1}C) = s^{-1}C$
as the composition
$$ \xymatrix{
d_n\colon s^{-1}C \ar[r]^-{s} &
C \ar[rrr]^-{-(-1)^{\frac{n(n-1)}{2}} \Delta_n} &&&
C^{\otimes n} \ar[rr]^-{{(s^{-1})}^{\otimes n}} &&
T^n(s^{-1}C).
}$$

Let us set
$$
k \coloneqq
\begin{cases}
 \min \{ n \,|\, \Delta_n \neq 0 \} , \qquad \text{if } \{ n \,|\, \Delta_n \neq 0 \} \neq \emptyset \\
+\infty, \qquad \text{otherwise,}
\end{cases}
$$
and let us define $\Omega' C$, $d' = \sum_{n\geq 1} d'_n$ and $k'$ analogously for a minimal $A_\infty$-coalgebra
$(C,\{\De'_n\}_n)$
isomorphic to 
$(C,\{\De_n\}_n)$.
First thing to notice is that since the $A_\infty$-algebras are isomorphic, their cobar constructions $\Omega C$ and $\Omega' C$
are isomorphic DGAs.

Let us now divide the proof into two complementary cases:

\noindent \emph{$\cdot$ Case $\Delta_n=0$ for all $n$:}

If $\Delta_n=0$ for all $n$, $d$ must be 0. The fact that $\Omega C$ and $\Omega' C$ are DGA isomorphic forces $d'$ to be 0 too. This implies that $\Delta'_n=0$ for all $n$.
Hence, $k = +\infty = k'$ and $ \dim \Ker {\De_m}_{|C_p}
= \dim C_p = \dim \Ker {\De'_m}_{|C_p} $ holds for every $m\geq 1$ and $p\geq 0$.

\noindent \emph{$\cdot$ Case $\Delta_n\neq0$ for some $n$:}
 
In this case, $k$ becomes the integer
$\min \{ n \,|\, \Delta_n \neq 0 \}. $
Since $(C,\{\De_n\}_n)$ is minimal, $k$ must be at least 2.
Note that $k$ is obviously equal to
$ \min \{ n \,|\, d_n \neq 0 \}. $
Applying again the argument just used in the previous case, if there exists some $n$ such that $\Delta_n\neq0$, there must exist some $n'$ such that $\Delta'_{n'}\neq0$ as well. Hence, all this holds for $k'$ as well.
Most of the rest of the proof will consist on showing that $k = k'$ and that the DGAs
$(\widehat T(s^{-1}C),d_k)$ and
$(\widehat T(s^{-1}C),d'_k)$
are isomorphic.

Assume $k\le k'$ and let
$ \widehat T(s^{-1}C)=F^0 \supseteq F^1
\supseteq \ldots \supseteq
F^p \supseteq F^{p+1}
\supseteq \ldots $
be the \textbf{word-length filtration}
of $\Omega C$,
\emph{i.e.,}
for all $p\in\N$,
$F^p$ is
the differential ideal given by
$$F^p = T^{\geq p}(s^{-1}C).$$
In the algebraic Milnor-Moore
spectral sequence,
$E_0$ is the graded algebra associated
to this filtration,
\emph{i.e.,}
for all $p\in\N$,
$$ E_0^p = \frac{F^p}{F^{p+1}}
\cong T^p(s^{-1}C)
\hspace{10mm}
\text{and}
\hspace{10mm}
E_0 = \bigoplus_{p\in\N} E_0^p 
\cong T(s^{-1}C),$$
and the differential 
$\dlie_0=\bigoplus_{p\in\N} \dlie_0^p\colon  E_0 \longrightarrow E_0 $
is the map induced by $d$.
Thus, 
for all $p\in\N$,
$ \dlie_0^p\colon E_0^p \longrightarrow E_0^p $
must be
$ d_1\colon  T^p(s^{-1}C) \longrightarrow T^p(s^{-1}C),$
which is 0, since $(C,\{\De_n\}_n)$ is minimal.
Hence, $ (E_0, \dlie_0) \cong 
(\widehat T(s^{-1}C),0). $

In this spectral sequence,
for all $p\in\N$, we have
$ E_1^p \cong E_0^p \cong T^p(s^{-1}C), $
and the map
$ \dlie_1^p\colon E_1^p \longrightarrow E_1^{p+1} $
induced by $d$
is thus
$ d_2\colon  T^p(s^{-1}C) \longrightarrow T^{p+1}(s^{-1}C).$
Hence, 
$ (E_1, \dlie_1) \cong (\widehat T(s^{-1}C),d_2). $
Similarly, we can easily verify that
$$ (E_0, \dlie_0) \cong \ldots \cong
(E_{k-2}, \dlie_{k-2}) \cong 
(\widehat T(s^{-1}C),0)
\hspace{10mm}
\text{and}
\hspace{10mm}
(E_{k-1}, \dlie_{k-1}) \cong
(\widehat T(s^{-1}C),d_k). $$

Let $\varphi\colon  \Omega C \longrightarrow
\Omega C'$
be an isomorphism of DGAs, that exists 
because $(C,\{\De_n\}_n)$ and 
$(C,\{\De'_n\}_n)$ are isomorphic
$A_\infty$-coalgebras.
Since $\varphi$ is filtered, 
it induces a morphism 
of spectral sequences,
$\{ E_r(\varphi) \}_r $,
from the algebraic Milnor-Moore
spectral sequence associated to
$\Omega C$ to that associated
to $\Omega' C$.
Recall that 
the linear part of $\varphi$,
$\varphi_1\colon s^{-1}C
\longrightarrow s^{-1}C,$
is defined by $\varphi(s^{-1}c)=\varphi_1(s^{-1}c)+\Phi$,
where $\Phi\in T^{\ge 2}(s^{-1}C)$. 
Since $\varphi$ is an isomorphism,
$\varphi_1$
must also be an isomorphism as well.
Now observe that $E_0(\varphi)=E_{k-2}(\varphi)$ is
precisely the isomorphism 
$\widehat T(\varphi_1)\colon 
(\widehat T(s^{-1}C),0)\longrightarrow
(\widehat T(s^{-1}C),0).$
Applying a version of the Comparison Theorem
\cite[Thm I in \S 1.3]{Greub-Halperin-Vanstone76}
in terms of DGAs, we conclude that
$ E_{k-1}(\varphi)\colon  
\left( T(s^{-1}C),d_{k} \right)
\longrightarrow
\left( T(s^{-1}C),d'_{k} \right)
$
is an isomorphism of DGAs.
In particular, $d'_k$ cannot be zero.
Hence $k=k'$ and the first claim in Thm.
\ref{THM:min-invariant} holds.

Lemma \ref{Lemma:00k00} guarantees that
$$ (C,\{0,\ldots,0,\De_k,0,\ldots\})
\hspace{10mm}
\text{and}
\hspace{10mm}
(C,\{0,\ldots,0,\De'_k,0,\ldots\}) $$
form two $A_\infty$-coalgebras.
To prove that they are isomorphic,
simply observe that the corresponding cobar constructions are $(\widehat T(s^{-1}C),d_k)$ and $(\widehat T(s^{-1}C),d'_k),$ which we have just seen are isomorphic DGAs via
 $E_{k-1}(\varphi)$.
 This tells us there exists an
isomorphism of $A_\infty$-coalgebras
$$ f \colon (C,\{0,\ldots,0,\De_k,0,\ldots\})
\longrightarrow
(C,\{0,\ldots,0,\De'_k,0,\ldots\}). $$
The identity MI$(k)$
from Definition \ref{DEF:Morph-A-coalg}
becomes
$ \De_k' f_{(1)} =
{f_{(1)}}^{\otimes k} \De_k, $
and since $f_{(1)}$ is an isomorphism, 
it follows that 
$ \dim \Ker {\De_k}_{|C_p}
=\dim \Ker {\De'_k}_{|C_p}, $
for all $p\geq 0$.

Obviously, by the definition of $k$, $ \dim \Ker {\De_m}_{|C_p}
= \dim C_p = \dim \Ker {\De'_m}_{|C_p} $ holds too for every $m<k$ and $p\geq 0$.
\end{proof}

We now prove Thm. \ref{THM:Coherent-choices-Quillen}, 
which states the possibility, under certain assumptions, of finding compatible
$A_\infty$-structures yielding to persistence modules
with which to compute $A_\infty$-persistence.

\begin{proof}
Let $K_0$ be a 1-connected CW complex.
If for all $0<i\leq N$, $K_i$ denotes the wedge $K_{i-1} \vee \mathbb{S}^{n_i}$ for some $n_i > 1$,
then:
\begin{itemize}
\item each $K_i$ is a 1-connected CW complex as well, and 
\item the maps $f_p^{i,i+1}: \widetilde H_p(K_i; \bQ) \longrightarrow \widetilde H_p(K_{i+1}; \bQ)$
and $f^{i,i+1}: \widetilde H_*(K_i; \bQ) \longrightarrow \widetilde H_*(K_{i+1}; \bQ)$ are inclusions.
\end{itemize} 

We start the construction by choosing any Quillen
minimal model $(\lib(V),\partial)$ of $K_0$.
Since $K_1$ can be decomposed as $K_1=K_0\cup_f e^{n_i}$, where the attaching map $f\colon \bS^{n_i}\to K_0$ is the trivial one, Thm. \ref{THM:Quillen-cells} then tells us that the free DGL
${(\lib(V\oplus \bQ a),\partial')}$
determined by
$$
\begin{cases}
\partial' v = \partial v, \qquad \text{for all }
v \in V \\
\partial' a=0 \\
|a|=n_i-1
\end{cases}
$$
is a Quillen model of $K_1.$
Furthermore, since
${\partial'_1}_{|V}=\partial_1 =0$
and 
$\partial' a=0$,
we have that $\partial'_1=0$,
so the model
$(\lib(V\oplus \bQ a),\partial')$ is indeed minimal.

Applying this argument at each step, 
we obtain a Quillen minimal model 
$
(\lib(V_i),\partial^i)
$
for each $K_i$
so that
$
\partial^{i+1} v = \partial^i v
$
for any $v \in V_i \subset V_{i+1}$
and for all $0 \leq i < N$.
These Quillen minimal models then translate into
transferred $A_\infty$-coalgebras
$\left( \widetilde H_*(K_i; \bQ), \{ \De_n^i \}_n
\right)$
such that for all $n\geq 1$, $p\geq 0$
and $0 \leq i < N$,
if $\Delta^i$ denotes the restriction of ${\Delta_n^i}$ to 
${\widetilde H_p(K_i; \bQ)}$,
then
$$
\De^{i+1} \alpha =
\De^i\alpha
$$
for every $\alpha \in \widetilde H_p(K_i; \bQ) \subset 
\widetilde H_p(K_{i+1}; \bQ)$.
Since the maps
$f_p^{i,i+1}$ and $f^{i,i+1}$ are inclusions,
this means that the equality
$$
\De^{i+1} f_p^{i,i+1} =
\left( f^{i,i+1} \right)^{\otimes n}
\De^i
$$
holds
and hence so does
$f_p^{i,i+1} \left( \Ker \Delta^i \right)
\subseteq \Ker \Delta^{i+1}.$ 
\end{proof}
%%%%%%%%%%%%%%%%%%%%%%%%%%%%%%%%%%%%%
%%%%%%%%%%%%%%%%%%%%%%%%%%%%%%%%%%%%%
%%%%%%%%%%%%%%%%%%%%%%%%%%%%%%%%%%%%%
\section{Conclusions}
\label{sec_conclusions}

Let us use the diagram in Fig. \ref{fig:DiagramaResumen}
to enumerate the contributions of this work.
 In relation to Fig. \ref{fig:DiagramaResumen}a,
 we provide a new proof of the Fundamental Theorem of Persistent Homology
 (see Lemma \ref{lemma_persModsSplit} and
 Thm. \ref{Thm:Fundamental_thm_PH}).
 Our work on Fig. \ref{fig:DiagramaResumen}b
 consists of showing how the part of $A_\infty$-structures 
 we focus on in $A_\infty$-persistence can form powerful descriptors
 (Thm. \ref{THM:min-invariant}, 
Cor. \ref{COR:min-invariant},
Prop. \ref{Prop:Borromean-m3} and \ref{Prop:ExtendingBorromean}
and Ex. \ref{prop_De_n-not-invariant}).
In relation to Fig. \ref{fig:DiagramaResumen}c,
we study $A_\infty$-persistence decompositions
 of persistence modules (Thm.
 \ref{Thm_Fund_Thm_A_infty-p_Functorial} and
\ref{THM:Coherent-choices-Quillen}).

In the current and following papers we keep developing the theory
of $A_\infty$-persistence because of its exciting great potential:
persistent homology has been used successfully in many areas, such as
digital imaging, sensor networks coverage, materials science, molecular modelling, signal processing, virus evolution and diagnosis of hepatic lesions, and all this has been possible by using persistence at the level of Betti numbers.
Hence, by enhancing the power of persistence through 
the use of $A_\infty$-structures, who knows where we can get?

% Same with references:
%In the current and following papers we further develop the theory
%of $A_\infty$-persistence, which we find exciting because of its great potential:
%persistent homology has been used successfully in many areas, such as
%digital imaging \cite{Robins-Wood-Sheppard11, 
%Perea-Carlsson14}, sensor networks coverage
%\cite{de_Silva-Ghrist07}, materials science
%\cite{Krama-MischaikowEtAl14, MacPherson-Schweinhart12}, molecular modelling
%\cite{Agarwal-Edelsbrunner-Harer-Wang06, Kovacev-Nikolic-Bubenik-Nikolic-Heo16, Gameiro-Hiraoka-Izumi-Kramar_Mischaikow-Nanda15}, signal processing
%\cite{Emrani-Gentimis-Krim14, Brown-Knudson09}, virus evolution
%\cite{Chan-Carlsson-Rabadan13} and diagnosis of hepatic lesions
%\cite{Adcock-Carlsson-Rubin14}, and all this has been possible by using persistence
%that computes topological information at the level of Betti numbers.
%Hence, by enhancing the power of persistence through 
%the use of $A_\infty$-structures, who knows where we can get?

%%%%%%%%%%%%%%%%%%%%%%%%%%%%%%%%%%%%%
%%%%%%%%%%%%%%%%%%%%%%%%%%%%%%%%%%%%%
%%%%%%%%%%%%%%%%%%%%%%%%%%%%%%%%%%%%%
\section*{Acknowledgements}
I would like to thank Prof. Aniceto Murillo and Prof. Jim Stasheff for their valuable feedback on this work.

%%%%%%%%%%%%%%%%%%%%%%%%%%%%%%%%%%%%%
%%%%%%%%%%%%%%%%%%%%%%%%%%%%%%%%%%%%%
%%%%%%%%%%%%%%%%%%%%%%%%%%%%%%%%%%%%%
% 1st run Quick Build with this:
%\bibliographystyle{plain}
%{\footnotesize
%%\bibliography{/Users/Kiko/Dropbox/BibTeX/mybib_Kiko}{}}
%}
%
% This will create a file 'optimisingAinftyPersistGps.bbl'
% 
% Now you can substitute this BibTex section by
% \input{optimisingAinftyPersistGps.bbl}

\end{document}